\newtheorem{theorem}{Theorem}[section]
\newtheorem{lemma}[theorem]{Lemma}
\newtheorem{proposition}[theorem]{Proposition}
\newtheorem{definition-proposition}[theorem]{Definition-Proposition}
\theoremstyle{definition}
\newtheorem{definition}[theorem]{Definition}
\newtheorem{example}[theorem]{Example}
\newcommand{\Hom}{\operatorname{Hom}\nolimits}
\newcommand{\End}{\operatorname{End}\nolimits}
\renewcommand{\mod}{\mathsf{mod}\hspace{.01in}}
\newcommand{\RHom}{\mathbf{R}\strut\kern-.2em\operatorname{Hom}\nolimits}
\numberwithin{equation}{section}
\begin{document}
\title{On the number of tilting modules over a class of Auslander algebras}
\thanks{2000 Mathematics Subject Classification: 16G10}
\thanks{Keywords: Auslander algebra; tilting module; support $\tau$-tilting module; Dynkin quiver}.
\thanks{$*$ is the corresponding author. Both of the authors are supported by NSFC
(Nos. 12171207). X. Zhang is supported by the Project Funded by the Priority
Academic Program Development of Jiangsu Higher Education Institutions and the Starting project of Jiangsu Normal University}

\author{Dan Chen}
\address{D. Chen:  School of Mathematics and Statistics, Jiangsu Normal University, Xuzhou, 221116, P. R. China.}
\email{chendan970208@163.com}
\author{Xiaojin Zhang$^*$}
\address{X. Zhang:  School of Mathematics and Statistics, Jiangsu Normal University, Xuzhou, 221116, P. R. China.}
\email{xjzhang@jsnu.edu.cn, xjzhangmaths@163.com}
\maketitle
\begin{abstract} Let $\Lambda$ be a radical square zero algebra of a Dynkin quiver and let $\Gamma$ be the Auslander algebra of $\Lambda$. Then the number of tilting right $\Gamma$-modules is $2^{m-1}$ if $\Lambda$ is of $A_{m}$ type for $m\geq 1$. Otherwise, the number of tilting right $\Gamma$-modules is $2^{m-3}\times14$ if $\Lambda$ is either of $D_{m}$ type for $m\geq 4$ or of $E_{m}$ type for $m=6,7,8$.
\end{abstract}

\section{\bf{Introduction}}
Tilting theory has been essential in the representation theory of finite dimensional algebras since 1970s(see \cite{BGP,BB,HR}). Tilting modules play an important role in tilting theory. So, it is interesting but difficult to classify the tilting modules over a given algebra. There are many algebraists working on this topics. Br\"{u}stle, Hille, Ringel and R\"{o}hrle \cite {BHRR} classified the tilting modules over the Auslander algebra of $K[x]/(x^{n})$. Iyama and Zhang \cite {IZ1} studied tilting modules over Auslander-Gorenstein algebras. Geuenich \cite {G} studied tilting modules of finite projective dimension for the Auslander algebra of $K[x]/(x^{n})$. Zhang \cite {Z2} showed the number of tilting modules over the Auslander algebra of radical square zero Nakayama algebras. Xie, Gao and Huang \cite {XGH} studied the number of tilting modules over the Auslander algebras of radical cube zero Nakayama algebras.  For more recent development on tilting modules, we refer to \cite {AT,K,PS}.

In 2014, Adachi Iyama and Reiten \cite {AIR} introduced the notion of $\tau$-tilting modules as generalizations of tilting modules in terms of mutations. This help us be able to get the tilting modules in terms of support $\tau$-tilting modules. Therefore, it is important to classify support $\tau$-tilting modules for a given algebra. Adachi \cite {A1} classified support $\tau$-tilting modules over Nakayama algebras; Adachi \cite {A2} and Zhang \cite {Z1} studied $\tau$-rigid modules over algebras with radical square zero; Mizuno \cite {M} classified $\tau$-tilting modules over preprojective algebras of Dynkin type; Iyama and Zhang \cite {IZ2} classified $\tau$-tilting modules over the Auslander algebra of $K[x]/(x^{n})$. For more recent development on $\tau$-tilting modules, we can refer to \cite {AiH,DIJ, KK, W, Zi,Z3}.

In this paper, we study tilting modules over the Auslander algebras of radical square zero algebras of Dynkin quiver in terms of $\tau$-tilting theory. By using a bijection over Auslander-Gorenstein algebras built by Iyama and the second author \cite {IZ1}, we can get the number of tilting modules over the Auslander algebras of radical square zero algebras of Dynkin quivers, which extends the results in \cite {Z2}. More precisely, we prove the following main result.

\begin{theorem}\label{1.1} Let $\Lambda$ be a radical square zero algebra of a Dynkin quiver and let $\Gamma$ be the Auslander algebra of $\Lambda$. Then the number of tilting right $\Gamma$-modules is $2^{m-1}$ if $\Lambda$ is of $A_{m}$ type for $m\geq 1$. Otherwise, the number of tilting right $\Gamma$-modules is $2^{m-3}\times14$ if $\Lambda$ is either of $D_{m}$ type for $m\geq 4$ or of $E_{m}$ type for $m=6,7,8$.
\end{theorem}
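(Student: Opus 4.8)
The plan is to make the Auslander algebra $\Gamma$ completely explicit and then to turn the enumeration of its tilting modules into a finite combinatorial problem governed by the Auslander--Reiten quiver of $\Lambda$. First I would record that, since $\Lambda=KQ/\rad^{2}$ is representation finite (the separated quiver of a Dynkin quiver is again a disjoint union of Dynkin quivers), the Auslander algebra $\Gamma=\End_\Lambda(M)$, with $M$ an additive generator of $\mod\Lambda$, exists and satisfies $\gldim\Gamma\le 2\le\operatorname{dom.dim}\Gamma$; in particular $\Gamma$ is Auslander--Gorenstein, so the bijection of \cite{IZ1} applies. Interpreting ``tilting'' in the classical sense $\pd\le 1$ (the generalized/Miyashita tilting modules are more numerous and do not give $2^{m-1}$), by \cite{AIR} a tilting $\Gamma$-module is the same as a faithful $\tau$-tilting $\Gamma$-module, so it suffices to count the latter. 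The key reduction I would extract from \cite{IZ1} combined with \cite{AIR} is that these are controlled by the indecomposable $\Gamma$-modules of projective dimension at most one together with their $\Ext^{1}$-relations, and that the latter can be read off directly from the mesh relations of the AR quiver of $\Lambda$.

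Second, I would set up the combinatorial model. Listing the indecomposable $\Gamma$-modules of $\pd\le 1$ and recording which pairs have a nonzero $\Ext^{1}$ produces a ``conflict graph''; a tilting module is then a conflict-free family of the correct cardinality $n=\#\{\text{indecomposable }\Lambda\text{-modules}\}$ that is moreover faithful, and by Bongartz's lemma every such family is automatically tilting. The structural fact I would aim to prove is that this conflict graph, and hence the count, is \emph{local} along $Q$: a linear stretch contributes multiplicatively, each edge giving one independent binary choice (delete one of a conflicting pair), while a trivalent vertex contributes a single universal local factor.

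Third, I would carry out the count by type. For $Q=A_{m}$ with the linear orientation $\Gamma$ is a Nakayama algebra whose $\pd\le 1$ indecomposables split into $m-1$ disjoint $\Ext^{1}$-conflicting pairs, all other such modules being projective--injective and forced into every tilting module; the $m-1$ independent binary choices give exactly $2^{m-1}$, re-deriving \cite{Z2}. For $Q$ of type $D_{m}$ or $E_{m}$ there is a unique trivalent vertex; after peeling off the linear arms, which contribute $2^{m-3}$ exactly as in type $A$, one is left with a local configuration at the branch point whose conflict-free faithful families must be enumerated by hand, and I expect this local number to be precisely $14$, independent of the arm lengths. That independence is what forces the single uniform value $2^{m-3}\times 14$ for all of $D_{m}$ and $E_{6},E_{7},E_{8}$.

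The hardest part will be the branch-point analysis: showing that the three arms decouple (so their lengths enter only through $2^{m-3}$) and computing the local number $14$ at the degree-three vertex. I would also have to address orientation-independence, since a priori $KQ/\rad^{2}$ depends on the orientation of $Q$ whereas the statement does not; I would handle this by checking that the conflict graph, or at least its number of maximal conflict-free faithful families, is invariant under reversing arrows, e.g.\ via reflection functors or a direct comparison of AR quivers. Finally, the appearance of the \emph{same} constant $14$ for types $D$ and $E$ should be explained conceptually: all these diagrams share a single degree-three vertex whose local module configuration is identical, so only the total rank (through the arm factor $2^{m-3}$) distinguishes the answers.
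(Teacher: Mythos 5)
Your proposal has the right shape but a genuine gap: the decisive steps --- the ``locality'' of your conflict graph along $Q$, the decoupling of the three arms at the trivalent vertex, and the local factor $14$ --- are precisely the content of the theorem, and you offer no mechanism for proving them; you explicitly defer them as ``the hardest part''. Working directly in $\mod\Gamma$ you would first have to classify the indecomposable $\Gamma$-modules of projective dimension at most one and then compute all $\Ext^{1}$-groups among them; nothing in the mesh relations of the AR quiver of $\Lambda$ hands you this ``directly'', and the factorization of the count of maximal $\Ext$-orthogonal families requires showing there are \emph{no} conflicts between arm modules and branch modules together with additivity of maximal sizes --- none of which is self-evident from your setup. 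You also misstate what \cite{IZ1} provides: it does not say that tilting $\Gamma$-modules are controlled by the projective-dimension-one indecomposables and their $\Ext^{1}$-relations; it gives a bijection between $\tilt\Gamma$ and the support $\tau$-tilting modules over $\Gamma/(e)$, where $e$ is the idempotent of the additive generator of the projective-injectives (Theorem \ref{2.7}). Your Bongartz-style reformulation (tilting $=$ maximal rigid families of projective dimension at most one, faithfulness automatic) and your type-$A$ picture are correct and consistent with \cite{Z2}, and the orientation worry is moot for the statement as the paper intends it, since Definition \ref{2.8} fixes the orientations; but as written your plan is a conjecture about the conflict graph, not a proof.

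The paper fills exactly this gap by passing to the quotient, where your conjectured decoupling becomes a triviality. After computing $\Gamma$ by quiver and relations (Proposition \ref{3.1}), one checks that $\Gamma/(e)$ is a \emph{direct sum} of a semisimple algebra with $m-3$ simples (respectively $m-1$ in type $A$) and, in types $D$ and $E$, the hereditary algebra with quiver $4\rightarrow 3\leftarrow 5$. The block decomposition makes the arms and the branch point genuinely independent: Proposition \ref{3.3} (proved via minimal presentations and Lemma \ref{2.3}, plus the finite-connected-component criterion Lemma \ref{2.5}) shows the support $\tau$-tilting count is multiplicative over blocks, Proposition \ref{3.2} gives $2^{n}$ for a semisimple block with $n$ simples, and Lemma \ref{3.4} computes the factor $14$ for the three-vertex algebra by drawing its entire support $\tau$-tilting quiver. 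This also explains conceptually why the same constant $14$ appears in types $D$ and $E$: in every case the branch point contributes the identical three-vertex block. To salvage your route you would either have to carry out the projective-dimension-one and $\Ext^{1}$ classification in $\mod\Gamma$ by hand, or transport your conflict graph through the bijection of Theorem \ref{2.7} --- at which point you have reproduced the paper's proof.
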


We show the organization of this paper as follows: In Section 2, we recall some basic preliminaries on tilting modules, $\tau$-tilting modules and Auslander algebras; In Section 3, we prove the main results and give examples to show the main results.

Throughout this paper, all the algebras are finite dimensional basic algebras over an algebraically closed field $K$ and all modules are finitely generated right modules. For a tilting module, we mean the classical tilting module. We use $\tau$ to denote the Auslander-Reitien translation functor. For an algebra $\Lambda$, we use $\mod\Lambda$ to denote the category of finitely generated right $\Lambda$-modules.

\section{\bf{Preliminaries}}

In this section, we recall definitions and basic facts on tilting modules, $\tau$-tilting modules, and Auslander algebras.

 For a module $M$, we use pd$_{\Lambda}M$ and $|M|$ to denote the projective dimension of $M$ and the number of indecomposable direct summand of $M$, respectively. Now we recall the definition of a tilting module \cite{HR}.

\begin{definition}\label{2.1}Let $\Lambda$ be an algebra and $T\in \mod \Lambda$, $T$ is called a tilting module if the following are satisfied:
\begin{enumerate}[\rm(1)]
\item pd$_{\Lambda}{T} \leq1$.
\item Ext$^{i}_{\Lambda}(T,T)=0$, for $i\geq1$.
\item $|T|=|\Lambda|$.
\end{enumerate}
\end{definition}

Now we recall the definition of $\tau$-tilting modules introduced in \cite {AIR}.

\begin{definition}\label{2.2} Let $\Lambda$ be an algebra and $M\in\mod \Lambda$.
\begin{enumerate}[\rm(1)]
\item We call $M$ {\it $\tau$-rigid} if Hom$_{\Lambda}(M,\tau M)=0$.
\item $M$ is called a {\it $\tau$-tilting} module if $M$ is $\tau$-rigid
and $|M|=|\Lambda|$.
\item We call $M$ in $\mod\Lambda$ {\it support $\tau$-tilting} if there exists an idempotent $e$ of $\Lambda$ such that $M$ is a $\tau$-tilting ($\Lambda/(e)$)-module.
\end{enumerate}
\end{definition}

The following lemma \cite[Proposition 2.4]{AIR} on $\tau$-rigid modules is important.

\begin{lemma}\label{2.3} Let $X$ be in $\mod\Lambda$ with a minimal projective presentation $P_{1}\stackrel{d_{1}}{\longrightarrow} P_{0}\stackrel{d_{0}}{\longrightarrow} X\longrightarrow 0$. Then
$X$ is $\tau$-rigid if and only if the map $ \Hom_{\Lambda}(P_{0},X)\stackrel{d_{1}^*}{\longrightarrow} \Hom_{\Lambda}(P_{1},X)$ is surjective, where ${d_1}^*=\Hom_\Lambda(d_{1},X)$.
\end{lemma}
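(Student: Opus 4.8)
The plan is to reduce the $\Hom$-condition defining $\tau$-rigidity to a vanishing condition on a tensor product, and then to identify that tensor product with the cokernel of the map in the statement. First I would unwind the definition of the Auslander--Reiten translate. Applying $(-)^{*}=\Hom_{\Lambda}(-,\Lambda)$ to the minimal projective presentation $P_{1}\xrightarrow{d_{1}}P_{0}\xrightarrow{d_{0}}X\to 0$ yields an exact sequence $P_{0}^{*}\xrightarrow{d_{1}^{*}}P_{1}^{*}\to \Tr X\to 0$, so that the transpose is $\Tr X=\Coker(d_{1}^{*})$, a left $\Lambda$-module, and by definition $\tau X=D\Tr X$, where $D=\Hom_{K}(-,K)$ is the standard $K$-duality. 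Here the superscript $^{*}$ denotes the $\Lambda$-dual, which should be carefully distinguished from the $d_{1}^{*}=\Hom_{\Lambda}(d_{1},X)$ appearing in the statement.

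The key reduction is the tensor--hom adjunction. Since $X$ is a right $\Lambda$-module and $\Tr X$ a left $\Lambda$-module, there is a natural isomorphism $\Hom_{\Lambda}(X,D\Tr X)\cong D(X\otimes_{\Lambda}\Tr X)$. Because $D$ is a faithful exact duality over the field $K$, a vector space and its $K$-dual vanish simultaneously; hence $\Hom_{\Lambda}(X,\tau X)=0$ if and only if $X\otimes_{\Lambda}\Tr X=0$. This turns the problem into computing $X\otimes_{\Lambda}\Tr X$ and checking when it is zero.

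To finish I would compute that tensor product directly from the presentation of $\Tr X$. Applying the right-exact functor $X\otimes_{\Lambda}-$ to $P_{0}^{*}\xrightarrow{d_{1}^{*}}P_{1}^{*}\to \Tr X\to 0$ gives $X\otimes_{\Lambda}P_{0}^{*}\xrightarrow{1\otimes d_{1}^{*}}X\otimes_{\Lambda}P_{1}^{*}\to X\otimes_{\Lambda}\Tr X\to 0$, so $X\otimes_{\Lambda}\Tr X\cong\Coker(1\otimes d_{1}^{*})$. For a finitely generated projective $P$ the evaluation map $X\otimes_{\Lambda}P^{*}\to\Hom_{\Lambda}(P,X)$, $x\otimes f\mapsto(p\mapsto x\,f(p))$, is a natural isomorphism (it holds for $P=\Lambda$ and extends to finite sums and summands). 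Naturality in $P$ shows that under this isomorphism the map $1\otimes d_{1}^{*}$ is carried to $\Hom_{\Lambda}(d_{1},X)\colon\Hom_{\Lambda}(P_{0},X)\to\Hom_{\Lambda}(P_{1},X)$, that is, to the $d_{1}^{*}$ of the statement. Therefore $X\otimes_{\Lambda}\Tr X\cong\Coker\big(\Hom_{\Lambda}(d_{1},X)\big)$, which vanishes exactly when $\Hom_{\Lambda}(d_{1},X)$ is surjective, yielding the claimed equivalence.

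The main care point is the last identification: verifying the naturality of the evaluation isomorphism and checking, by a short diagram chase, that it intertwines $1_{X}\otimes d_{1}^{*}$ (built from the $\Lambda$-dual of $d_{1}$) with $\Hom_{\Lambda}(d_{1},X)$ (precomposition with $d_{1}$) — the two roles of the symbol $d_{1}^{*}$ must be kept apart throughout. I would also note explicitly that minimality of the presentation is needed only to ensure $\tau X=D\Tr X$ (so that $\Tr X$ acquires no spurious projective summands); the cokernel computation above is valid for any projective presentation.
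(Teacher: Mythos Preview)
The paper does not give its own proof of this lemma; it simply quotes it as \cite[Proposition~2.4]{AIR}. So there is nothing in the paper to compare your argument against. That said, your proof is correct and is essentially the standard one: reduce $\Hom_\Lambda(X,\tau X)=0$ via the duality--adjunction $\Hom_\Lambda(X,D\Tr X)\cong D(X\otimes_\Lambda\Tr X)$ to the vanishing of $X\otimes_\Lambda\Tr X$, then identify the latter with $\Coker\big(\Hom_\Lambda(d_1,X)\big)$ using the natural isomorphism $X\otimes_\Lambda P^{*}\cong\Hom_\Lambda(P,X)$ for finitely generated projectives. Your care in distinguishing the two meanings of $d_1^{*}$ is appropriate, and your remark on minimality is accurate: it is only needed so that $D\Tr X$ genuinely equals $\tau X$, while the cokernel identification itself holds for any projective presentation (though the resulting cokernel then acquires extra summands).
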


We also need the following definition of support $\tau$-tilting modules.

\begin{definition}\label{2.4} Let $(M,P)$ be a pair with $M\in$ mod$\Lambda$ and $P$ projective.
\begin{enumerate}[\rm(1)]
\item We call $(M,P)$ a $\tau$-rigid pair if $M$ is $\tau$-rigid and Hom$_{\Lambda}(P,M)$=0.
\item We call $(M,P)$ a support $\tau$-tilting (respectively, almost complete support $\tau$-tilting) pair if $(M,P)$ is $\tau$-rigid and $|M|+|P|=|A|$ (respectively, $|M|+|P|=|A|-1$).
\end{enumerate}
\end{definition}

For an algebra $\Lambda$, we use $s\tau$-tilt $\Lambda$ to denote the set of isomorphism class of support $\tau$-tilting modules over $\Lambda$. Denote by $Q$(s$\tau$-tilt$\Lambda$) the support $\tau$-tilting quiver of $\Lambda$. The following lemma in \cite[Corollary 2.38]{AIR} is useful in this paper.

\begin{lemma}\label{2.5} If $Q$(s$\tau$-tilt$\Lambda$) has a finite connected component $C$, then $Q$(s$\tau$-tilt$\Lambda$)=$C$.
\end{lemma}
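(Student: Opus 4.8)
The plan is to reconstruct the mutation-theoretic argument of Adachi--Iyama--Reiten from which this statement is quoted, whose decisive input is the geometry of $g$-vectors. The starting point is the structural description underlying the whole theory: $Q(\sttilt \Lambda)$ is the Hasse quiver of the poset of support $\tau$-tilting modules ordered by $M\le N$ if and only if $\Fac M\subseteq \Fac N$, with arrows pointing from the larger module to the smaller one. In this poset the free module $\Lambda$ is the unique maximum, hence the unique source of the quiver (since $\Fac\Lambda=\mod\Lambda$), and $0$ is the unique minimum, hence the unique sink. The second ingredient is the exchange theorem phrased through Definition~\ref{2.4}: every basic support $\tau$-tilting pair has exactly $n:=|\Lambda|$ indecomposable summands, and deleting any one of them yields an almost complete support $\tau$-tilting pair that is completable in exactly two ways. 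Consequently every vertex of $Q(\sttilt \Lambda)$ has exactly $n$ neighbours, each obtained by a single such exchange.

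First I would locate $\Lambda$ inside $C$. Because $C$ is a finite connected component, the induced subposet has a maximal element $T$. All $n$ neighbours of $T$ lie in $C$, so no neighbour of $T$ can sit strictly above it; that is, $T$ has no incoming arrow and is therefore a source of the whole quiver. As $\Lambda$ is the unique source, $T=\Lambda$, so $\Lambda\in C$; dually, passing to a minimal element of $C$ gives $0\in C$.

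The decisive step is to upgrade ``$C$ is finite and contains $\Lambda$'' to ``$C$ is everything'', and here I would pass to the $g$-vector fan. Each basic support $\tau$-tilting pair determines a full-dimensional cone in $\mathbb{R}^{n}$ spanned by the $g$-vectors of its indecomposable summands; these cones are pairwise non-overlapping, and the $n$ facets of each cone correspond bijectively to the $n$ exchanges at that vertex, crossing a facet producing precisely the neighbouring pair. Since $C$ is a connected component it is closed under exchange, so the union $U$ of the finitely many cones attached to the vertices of $C$ has the property that every facet of every constituent cone is shared with another cone of $C$. Thus $U$ has empty topological boundary: it is a nonempty set that is both closed (a finite union of closed cones) and open (no free boundary facets), whence $U=\mathbb{R}^{n}$. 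As the cones of all support $\tau$-tilting pairs are pairwise non-overlapping, the cones coming from $C$ already exhaust the fan, and therefore $C=Q(\sttilt \Lambda)$.

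I expect the genuine obstacle to be exactly this last passage. The poset reasoning by itself is \emph{not} enough: being exchange-closed and containing the maximum $\Lambda$ does not exclude the coexistence of other, infinite components possessing neither a maximal nor a minimal element, because the ambient poset need not satisfy the ascending chain condition, so an upward path of exchanges from an outside module could in principle never reach $\Lambda$. What closes the gap is the rigidity of the $g$-fan---every facet of a support $\tau$-tilting cone is a genuine wall carrying a mutation on both sides---which forces a finite exchange-closed family of cones to tile all of $\mathbb{R}^{n}$. Making this clopen argument fully rigorous (treating lower-dimensional faces and the common apex correctly) is the one point requiring care; everything else is bookkeeping with Definition~\ref{2.4} and the Hasse-quiver description.
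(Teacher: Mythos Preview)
The paper does not prove this lemma; it is quoted from \cite[Corollary~2.38]{AIR} and used as a black box, so the only comparison available is with the original source. Your first step---locating $\Lambda$ in $C$ as a maximal element of the finite component, hence a source of the whole Hasse quiver---matches \cite{AIR} exactly. Your second step does not: Adachi--Iyama--Reiten make no appeal to $g$-vectors at this point. Their decisive input is the purely order-theoretic fact that whenever $T>U$ in $\sttilt\Lambda$ some left mutation $T'$ of $T$ still satisfies $T'\ge U$; iterating this from $\Lambda\in C$ toward an arbitrary $U$ produces a strictly decreasing chain inside $C$ which, by finiteness of $C$, must terminate at $U$, so $U\in C$. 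Your $g$-fan argument is also valid---it is essentially the proof, later given in \cite{DIJ}, that the $g$-vector fan of a $\tau$-tilting finite algebra is complete---and you have correctly isolated its one delicate spot (faces of codimension $\ge 2$ in the clopen step, usually handled by passing to the unit sphere). The trade-off is that the \cite{AIR} route imports a nontrivial mutation--order compatibility lemma but needs no topology, whereas your route replaces that lemma by a short polyhedral argument whose rigour demands the extra care you flag.
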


In the following we recall the definition of Auslander algebras in \cite{ARS}.

\begin{definition}\label{2.6}
An algebra $\Lambda$ is called an Auslander algebra if gl.dim$\Lambda\leq 2$ and $E_{i}(\Lambda)$ is projective for $i=0,1$, where $E_{i}(\Lambda)$ is the $(i+1)$-th term in a minimal injective resolution of $\Lambda$.
\end{definition}

 It is shown in \cite{ARS} that there is one to one bijection between Auslander algebras and algebras of finite representation type in \cite {ARS}. Let $\Lambda$ be an algebra of finite representation type, and $M$ an additive generator of $\mod\Lambda$. Then we call $\Gamma=\End_{\Lambda}M$ {\it the Auslander algebra} of $\Lambda$.

For an algebra $\Lambda$, we use tilt$\Lambda$ to denote the set of isomorphism classes of tilting modules in $\mod\Lambda$.
The following theorem on tilting modules over Auslander algebras in \cite {Z2} is essential in this paper. For more details on this bijection map we refer to \cite{IZ1, J}.

\begin{theorem}\label{2.7}Let $\Lambda$ be an Auslander algebra and $e$ be an idempotent such that $e\Lambda $ is the additive generator of projective-injective modules. Then there is a bijective between the set tilt$A$ of tilting modules over $\Lambda $ and the set $s\tau$-tilt $\Lambda/(e)$ of support $\tau$-tilting modules over $\Lambda/(e)$.
\end{theorem}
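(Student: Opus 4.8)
The plan is to factor the asserted bijection through torsion classes, using the two standard correspondences at our disposal. On the side of the quotient, the theory of \cite{AIR} identifies $\sttilt\Lambda/(e)$ with the set of functorially finite torsion classes of $\mod\Lambda/(e)$. On the side of $\Lambda$, classical tilting theory identifies $\tilt\Lambda$---whose objects have projective dimension at most one by Definition~\ref{2.1}(1)---with the \emph{cogenerating} functorially finite torsion classes of $\mod\Lambda$, namely those containing every injective $\Lambda$-module, via $T\mapsto\Fac T=\{X:\Ext^1_\Lambda(T,X)=0\}$. It therefore suffices to exhibit a bijection between the cogenerating functorially finite torsion classes of $\mod\Lambda$ and the functorially finite torsion classes of $\mod\Lambda/(e)$.

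The first step I would establish is that $e\Lambda\in\add T$ for every $T\in\tilt\Lambda$. Since $e\Lambda$ is injective by hypothesis, it lies in the cogenerating torsion class $\Fac T$ and satisfies $\Ext^1_\Lambda(-,e\Lambda)=0$, so it is Ext-injective in $\Fac T$; as the Ext-injective objects of a tilting torsion class are exactly $\add T$, we get $e\Lambda\in\add T$. Hence each tilting module splits as $T=T'\oplus e\Lambda$ with $T'$ free of projective-injective summands. This licenses the passage to $\Lambda/(e)$: I would take the forward map to be $\mathcal T\mapsto\mathcal T\cap\mod\Lambda/(e)$, viewing $\mod\Lambda/(e)$ inside $\mod\Lambda$ as the modules annihilated by $e$, and the backward map to send a torsion class $\mathcal U$ of $\mod\Lambda/(e)$ to the smallest torsion class of $\mod\Lambda$ containing $\mathcal U$ and all injective $\Lambda$-modules. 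On representatives this reads $T=T'\oplus e\Lambda\mapsto\overline{T'}$, the $\Lambda/(e)$-module attached to the non-projective-injective part.

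The main obstacle, and the point at which the Auslander hypothesis is indispensable, is to verify that these two maps are well defined and mutually inverse. One must show that intersecting a cogenerating torsion class with $\mod\Lambda/(e)$ again yields a functorially finite torsion class, and conversely that adjoining all injectives to a functorially finite torsion class of the quotient produces a cogenerating functorially finite torsion class of $\mod\Lambda$ restricting back to the original. Both rest on the homological rigidity recorded in Definition~\ref{2.6}: $\gldim\Lambda\leq 2$ controls the projective dimensions once the projective-injective part is split off, while the projectivity of the terms $E_0(\Lambda),E_1(\Lambda)$ of the minimal injective resolution ensures that the injective $\Lambda$-modules meet $\Fac T$ only through $\add e\Lambda$, so that no torsion-class data is lost or created under restriction and induction. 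Concretely I would verify the inverse property on $\tau$-rigid pairs by tracking minimal projective presentations through the quotient functor and applying Lemma~\ref{2.3}, whose surjectivity criterion converts a $\tau$-rigidity statement over $\Lambda/(e)$ into the vanishing of $\Ext^1_\Lambda$ required for tilting over $\Lambda$. The numerics then match automatically: splitting off the fixed summand $e\Lambda$ and recording the support idempotent of $\Lambda/(e)$ accounts for exactly $|\Lambda|-|\Lambda/(e)|$ indecomposables, so $|T|=|\Lambda|$ holds precisely when $\overline{T'}$ is support $\tau$-tilting. For the full verification I would follow the Auslander--Gorenstein framework of \cite{IZ1}.
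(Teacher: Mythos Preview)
The paper does not prove Theorem~\ref{2.7}; it is quoted from \cite{Z2}, with the underlying construction attributed to \cite{IZ1, J}. There is therefore no in-paper argument to compare against. Your outline is in the spirit of those references: the Iyama--Zhang bijection is indeed mediated by functorially finite torsion classes, and the passage to $\Lambda/(e)$ is essentially Jasso's $\tau$-tilting reduction applied at the projective-injective summand $e\Lambda$, so deferring the verification to \cite{IZ1} is exactly what the paper itself does.

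One slip is worth correcting. In your argument that $e\Lambda\in\add T$ you use injectivity of $e\Lambda$ to obtain $\Ext^1_\Lambda(-,e\Lambda)=0$, call $e\Lambda$ Ext-\emph{injective} in $\Fac T$, and then assert that the Ext-injective objects of a tilting torsion class are $\add T$. That identification is false in general (already for $\Lambda$ hereditary with $T=\Lambda$ the Ext-injectives of $\Fac T=\mod\Lambda$ are the injective modules, not the projectives); it is the Ext-\emph{projectives} of $\Fac T$ that coincide with $\add T$. The repair uses both halves of the projective-injective hypothesis: injectivity of $e\Lambda$ places it in the cogenerating class $\Fac T$, while projectivity gives $\Ext^1_\Lambda(e\Lambda,-)=0$, so $e\Lambda$ is Ext-projective in $\Fac T$ and hence lies in $\add T$. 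With that fix your sketch matches the strategy of \cite{IZ1}.
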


Now we recall the definition of Dynkin algebras as follows.

\begin{definition}\label{2.8}
We call an algebra $\Lambda$ of Dynkin type if the quiver of $\Lambda$ is one of the following quivers:

$A_{m}$ ($ m\geq 1$):
$$1\stackrel{a_{1}}{\longrightarrow} 2\stackrel{a_{2}}{\longrightarrow} \cdots \stackrel{a_{m-2}}{\longrightarrow} m-1 \stackrel{a_{m-1}}{\longrightarrow}m $$

$D_{m}$ ($ m\geq 4$):
$$\xymatrix{&  & 1
\ar[d]^{a_{1}} &  & & &\\
& 2 \ar[r]^{a_{2}} & 3\ar[r]^{a_{3}} & 4 \ar[r]^{a_{4}} &
\ar[r] \cdots\ar[r]^{a_{m-2}} &m-1\ar[r]^{a_{m-1}}&m}$$

$E_{6}$:
$$\xymatrix{&  & &
3\ar[d]^{a_{3}} &  &\\
&1 \ar[r]^{a_{1}} & 2 \ar[r]^{a_{2}} & 4\ar[r]^{a_{4}} & 5 \ar[r]^{a_{5}} &
 6}$$

$E_{7}$:
$$\xymatrix{&  & &
3\ar[d]^{a_{3}} & & &\\
&1 \ar[r]^{a_{1}} & 2 \ar[r]^{a_{2}} & 4\ar[r]^{a_{4}} & 5 \ar[r]^{a_{5}} &
 6\ar[r]^{a_{6}} &7}$$

$E_{8}$:
$$\xymatrix{&  & &
3\ar[d]^{a_{3}} & & & &\\
&1 \ar[r]^{a_{1}} & 2 \ar[r]^{a_{2}} & 4\ar[r]^{a_{4}} & 5 \ar[r]^{a_{5}} &
 6\ar[r]^{a_{6}} &7\ar[r]^{a_{7}}&8}$$
\end{definition}

In the following we recall properties of the block decomposition of an algebra from \cite[P92-93, Theorem 1, Proposition 2]{Al}.

\begin{proposition}\label{2.9} Let $A$ be an algebra and $M\in \mod A$. Then
\begin{enumerate}[\rm(1)]
\item $\Lambda$ has a unique decomposition into a direct sum of indecomposable subalgebras, that is $A=A_1\oplus A_2\oplus\dots\oplus Ar$.
\item $M$ has a unique decomposition as $M=M_1\oplus M_2\oplus\dots \oplus M_r$ with $M_i\in \mod A_i$ and $M_iA_j=0$.
\item $\Hom_{A}(M_i,M_j)=0$ for any $i\not=j$.

\end{enumerate}
\end{proposition}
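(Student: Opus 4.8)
The plan is to deduce all three parts from a single structural fact: the decomposition of the identity of $A$ into primitive orthogonal \emph{central} idempotents. First I would pass to the center $Z(A)$, which is a finite-dimensional commutative $K$-algebra and hence a finite product of local rings; equivalently, the identity $1_A$ admits a decomposition $1_A = e_1 + \cdots + e_r$ into pairwise orthogonal primitive idempotents of $Z(A)$, so that $e_i e_j = \delta_{ij} e_i$ and each $e_i$ is central in $A$. Setting $A_i := e_i A = A e_i = e_i A e_i$, centrality turns this into a decomposition $A = A_1 \oplus \cdots \oplus A_r$ into two-sided ideals, each $A_i$ being a subalgebra with identity $e_i$. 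The indecomposability of $A_i$ as an algebra is then equivalent to the primitivity of $e_i$: a nontrivial central idempotent of $A_i$ would split $e_i$ inside $Z(A)$, contradicting its primitivity. This gives the existence in (1).

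For the uniqueness in (1), I would use that a complete set of primitive orthogonal central idempotents summing to $1_A$ is uniquely determined up to reordering. Any algebra direct-sum decomposition $A = \bigoplus_k B_k$ forces the units $1_{B_k}$ to be orthogonal central idempotents with $\sum_k 1_{B_k} = 1_A$, and a standard refinement argument—multiplying the two systems $\{e_i\}$ and $\{1_{B_k}\}$ together and using orthogonality together with primitivity—shows that the two sets coincide up to a permutation. This is precisely the content of the cited result from \cite[P92--93, Theorem 1, Proposition 2]{Al}, so I would either quote it directly or reproduce this short argument.

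Part (2) is then a formal consequence. Writing $M = M \cdot 1_A = M e_1 \oplus \cdots \oplus M e_r$ and putting $M_i := M e_i$, each $M_i$ is a right $A_i$-module because $e_i$ is central; the sum is direct by the orthogonality $e_i e_j = \delta_{ij} e_i$; and $M_i A_j = M e_i e_j A = 0$ for $i \neq j$. Uniqueness of this decomposition follows at once from the uniqueness of the $e_i$ established in (1), since $M_i$ is recovered from $M$ as $M e_i$.

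Finally, part (3) follows from centrality of the idempotents. Given $i \neq j$ and $f \in \Hom_A(M_i, M_j)$, every $x \in M_i$ satisfies $x = x e_i$, so $A$-linearity gives $f(x) = f(x e_i) = f(x) e_i$; but $f(x) \in M_j = M_j e_j$ yields $f(x) = f(x) e_j$, and hence $f(x) = f(x) e_j e_i = 0$ by orthogonality. Thus $\Hom_A(M_i, M_j) = 0$. The only point demanding genuine care is the uniqueness in (1)—existence, together with the module-level statements (2) and (3), are purely formal once a complete set of orthogonal central idempotents is in hand—so I expect the refinement argument underlying uniqueness to be the main, albeit routine, obstacle.
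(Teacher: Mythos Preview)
Your argument is correct and is in fact the standard proof via primitive central idempotents. However, the paper does not give its own proof of this proposition: it is stated as a recall of \cite[P92--93, Theorem 1, Proposition 2]{Al} and is used without further justification. Your approach is precisely the one underlying Alperin's treatment, so there is nothing to compare beyond noting that you have supplied the details that the paper simply cites.
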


\section{\bf{Main results}}
In this section, we show the number of tilting modules over Auslander algebras of radical square zero Dynkin algebras.
By a straight calculation, one gets the Auslander algebras of radical square zero Dynkin algebras as follows.

\begin{proposition}\label{3.1}
\begin{enumerate}[\rm(1)]
\item Let $\Gamma$ be the Auslander algebra of a radical square zero algebra of type $A_{m}$. Then $\Gamma$ is given by the quiver
$Q_{1}$:
$$ 1\stackrel{a_{1}}{\longleftarrow} 2\stackrel{a_{2}}{\longleftarrow} \cdots \stackrel{a_{2m-3}}{\longleftarrow}2m-2 \stackrel{a_{2m-2}}{\longleftarrow}2m-1$$

$\mathrm{with}$ $\mathrm{the}$ $\mathrm{relations}$:$$a_{2k-1} a_{2k}=0 (1\leq k \leq m-1 ).$$

\item  Let $\Gamma$ be the Auslander algebra of a radical square zero algebra of type $D_m$. Then $\Gamma$ is given by the quiver
$Q_{2}$:
$$\xymatrix{& & & & & &2m-4\ar[ld]_{a_{2m-5}} & & 2m-1 \ar[ld]_{a_{2m-1}} \\
& 1& 2 \ar[l]_{a_{1}} &\cdots\ar[l]_{a_{2}} & 2m-6 \ar[l]_{a_{2m-7}} &2m-5\ar[l]_{a_{2m-6}} &  & 2m-2 \ar[lu]_{a_{2m-3}}\ar[ld]_{a_{2m-2}} &  \\
& & & & & & 2m-3\ar[lu]_{a_{2m-4}}& & 2m \ar[lu]_{a_{2m}} }$$

$\mathrm{with}$ $\mathrm{the}$ $\mathrm{relations}:$ $a_{1} a_{2}=0, a_{3} a_{4}=0, \cdots, a_{2m-7}a_{2m-6}=0, a_{2m-5}a_{2m-3}= a_{2m-4} a_{2m-2}, $ \\ $ a_{2m-3} a_{2m-1}=0, a_{2m-2} a_{2m}=0 $.

\item  Let $\Gamma_m$ be the Auslander algebra of a radical square zero algebra of type $E_m$ with $m=6,7,8$. Then $\Gamma_{m}$ is given by the quiver $Q_{6}$:
$$\xymatrix{& & & & & & 6\ar[ld]_{a_{5}} & &9\ar[ld]_{a_{9}} &  & \\
&1 & 2 \ar[l]_{a_{1}} & 3\ar[l]_{a_{2}} & 4 \ar[l]_{a_{3}} &5 \ar[l]_{a_{4}} &  & 8\ar[lu]_{a_{7}}\ar[ld]_{a_{8}}& & & \\
& & & & & &7\ar[lu]_{a_{6}} & &10\ar[lu]_{a_{10}} & 11\ar[l]_{a_{11}} & 12\ar[l]_{a_{12}}}$$

$\mathrm{with}$ $\mathrm{the}$ $\mathrm{relations}:$ $a_{1} a_{2}=0, a_{3} a_{4}=0, a_{5} a_{7}=a_{6} a_{8}, a_{7} a_{9}=0, a_{8} a_{10}=0, a_{11} a_{12}=0 $.

$Q_{7}$:
$$\xymatrix{& & & & & & & & 8\ar[ld]_{a_{7}} & &11\ar[ld]_{a_{11}} &  & \\
&1 & 2 \ar[l]_{a_{1}} & 3\ar[l]_{a_{2}} & 4 \ar[l]_{a_{3}} &5 \ar[l]_{a_{4}} & 6\ar[l]_{a_{5}}& 7 \ar[l]_{a_{6}} &  & 10\ar[lu]_{a_{9}}\ar[ld]_{a_{10}}& & & \\
& & & & & & & &9\ar[lu]_{a_{8}} & &12\ar[lu]_{a_{12}} & 13\ar[l]_{a_{13}} & 14\ar[l]_{a_{14}}}$$

$\mathrm{with}$ $\mathrm{the}$ $\mathrm{relations}:$ $a_{1} a_{2}=0, a_{3} a_{4}=0, a_{5} a_{6}=0, a_{7} a_{9}=a_{8} a_{10}, a_{9} a_{11}=0, a_{10} a_{12}=0, a_{13}a_{14}=0 $.

$Q_{8}$:
$$\xymatrix{& & &  & & & & 10\ar[ld]_{a_{9}} & &13\ar[ld]_{a_{13}} &  & \\
&1 & 2 \ar[l]_{a_{1}} & 3\ar[l]_{a_{2}} & \cdots \ar[l]_{a_{3}} & 8 \ar[l]_{a_{7}}& 9 \ar[l]_{a_{8}} &  & 12\ar[lu]_{a_{11}}\ar[ld]_{a_{12}}& & & \\
& & &  & & & &11\ar[lu]_{a_{10}} & &14\ar[lu]_{a_{14}} & 15\ar[l]_{a_{15}} & 16\ar[l]_{a_{16}}}$$

$\mathrm{with}$ $\mathrm{the}$ $\mathrm{relations}:$ $a_{1} a_{2}=0, a_{3} a_{4}=0, a_{5} a_{6}=0, a_{7} a_{8}=0 , a_{9} a_{11}=a_{10} a_{12}, a_{11} a_{13}=0, a_{12} a_{14}=0, a_{15}a_{16}=0 $.

\end{enumerate}
\end{proposition}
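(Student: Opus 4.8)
The statement computes the Auslander algebra $\Gamma=\End_\Lambda M$, where $M$ is the additive generator of $\mod\Lambda$, for each radical square zero Dynkin algebra $\Lambda$. My plan is to read off the Gabriel quiver and relations of $\Gamma$ directly from the Auslander--Reiten theory of $\Lambda$: it is classical that the quiver of $\Gamma$ is the AR quiver of $\Lambda$ (vertices $=$ isoclasses of indecomposables, arrows $=$ a basis of $\rad(M_i,M_j)/\rad^2(M_i,M_j)$, i.e.\ the irreducible maps), and that, since these algebras are representation-directed and hence standard, the relations of $\Gamma$ are generated by the mesh (Auslander--Reiten) relations. Thus the whole proposition reduces to drawing the AR quiver of $\Lambda$ together with its almost split sequences.

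First I would classify the indecomposable $\Lambda$-modules. Since $\rad^2\Lambda=0$ and the underlying quiver is a tree, every indecomposable is either a simple $S_i$, a length-two arrow module with top and socle the source and target of an arrow (this is the indecomposable projective at a non-sink vertex and the indecomposable injective at a non-source vertex), or the indecomposable injective at the unique trivalent vertex, which has length three with socle the branch simple and top the sum of the two simples feeding into it. Counting yields $2m-1$ indecomposables in type $A_m$ and $2m$ in types $D_m$ and $E_m$, matching the numbers of vertices of $Q_1,Q_2,Q_6,Q_7,Q_8$.

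Next I would knit the AR quiver starting from the unique simple projective $S$ at the sink of the Dynkin quiver, using that $\Lambda$ is representation-finite. Along each linear arm one obtains the pattern $0\to S_{i+1}\to P_i\to S_i\to 0$, whose mesh relations are exactly the length-two zero relations $a_{2k-1}a_{2k}=0$. The essential region is the trivalent vertex $v$: there $S_v$ embeds into both projectives $P,P'$ lying over $v$, so the almost split sequence $0\to S_v\to P\oplus P'\to I_v\to 0$ has a two-term middle, and its mesh relation is precisely the commutativity relation (e.g.\ $a_{2m-5}a_{2m-3}=a_{2m-4}a_{2m-2}$ for $Q_2$). Continuing past $I_v$, the almost split sequences ending at the two simple injectives cross over, namely $0\to P'\to I_v\to S\to 0$ and $0\to P\to I_v\to S'\to 0$; their mesh relations are the two remaining zero relations, the \emph{crossing} composites $P\to I_v\to S'$ and $P'\to I_v\to S$, whereas the \emph{straight} composites $P\to I_v\to S$ and $P'\to I_v\to S'$ are the nonzero natural surjections and impose no relation. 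An explicit relabeling then identifies the resulting AR quiver (read with the orientation dictated by the right-$\Gamma$-module convention, so that it appears as the listed $Q_i$) with each case; the only difference between $D_m,E_6,E_7,E_8$ is the lengths of the two linear arms attached to the branch.

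The main obstacle is the bookkeeping at the trivalent vertex: one must follow the almost split sequences carefully enough to see both the single commutativity relation and, correctly, which two of the four length-two paths through $I_v$ vanish, since the crossing versus straight distinction is exactly what distinguishes the zero relations from the surviving maps. A secondary point to settle is the justification that the relations of $\Gamma$ are precisely the mesh relations; here I would either invoke that these algebras are representation-directed, so the mesh category is equivalent to $\mod\Lambda$, or, to keep the argument self-contained, verify directly that the listed relations hold in $\End_\Lambda M$ and that a dimension count forces them to generate the relation ideal.
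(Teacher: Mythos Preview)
Your approach is correct and is precisely the ``straight calculation'' that the paper alludes to but does not carry out: the paper states Proposition~3.1 with no proof beyond the sentence ``By a straight calculation, one gets the Auslander algebras of radical square zero Dynkin algebras as follows.'' Your plan---list the indecomposables, knit the AR quiver, and read off the mesh relations---is the standard way to perform this computation, and your analysis of the trivalent region (the length-three injective $I_v$, the commutativity mesh $0\to S_v\to P\oplus P'\to I_v\to 0$, and the two ``crossing'' zero relations coming from $0\to P'\to I_v\to S\to 0$ and $0\to P\to I_v\to S'\to 0$) is accurate.

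One small point worth tightening: your appeal to representation-directedness to conclude that the mesh relations generate the relation ideal is fine here (these algebras are directed, and over an algebraically closed field any representation-finite algebra is standard), but since the paper works entirely by hand, the self-contained alternative you mention---verifying the listed relations hold in $\End_\Lambda M$ and matching dimensions---is closer in spirit to what the authors presumably intend by ``straight calculation.'' Either route is short for these examples.
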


To prove the main results we need the following proposition in \cite {Z2}.
\begin{proposition}\label{3.2} Let $\Lambda$ be a semi-simple algebra with $n$ simple modules. Then the number of support $\tau$-tilting $\Lambda$-modules is $2^{n}$.
\end{proposition}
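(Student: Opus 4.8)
The plan is to exploit the extreme simplicity of the module category of a semi-simple algebra. First I would record that since $\Lambda$ is semi-simple, every module in $\mod\Lambda$ is projective (and injective), so the Auslander--Reiten translate vanishes identically: $\tau M=0$ for every $M$. Consequently the $\tau$-rigidity condition $\Hom_{\Lambda}(M,\tau M)=0$ holds vacuously, and \emph{every} module is $\tau$-rigid. Moreover the indecomposable modules are exactly the $n$ pairwise non-isomorphic simple modules $S_{1},\dots,S_{n}$, and each $S_{i}$ coincides with the corresponding indecomposable projective; in particular $\Hom_{\Lambda}(S_{i},S_{j})=0$ for $i\neq j$ while $\Hom_{\Lambda}(S_{i},S_{i})=K$.

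Next I would pass to support $\tau$-tilting pairs and invoke the Adachi--Iyama--Reiten bijection between support $\tau$-tilting modules and support $\tau$-tilting pairs $(M,P)$ in the sense of Definition~\ref{2.4}. Since $K$ is algebraically closed and $\Lambda$ is semi-simple, any such basic $M$ and $P$ are direct sums of distinct simples, say $M=\bigoplus_{i\in A}S_{i}$ and $P=\bigoplus_{i\in B}S_{i}$ for subsets $A,B\subseteq\{1,\dots,n\}$. The pair condition $\Hom_{\Lambda}(P,M)=0$ then forces $A\cap B=\emptyset$ by the orthogonality of the simples, while the cardinality condition $|M|+|P|=|A|+|B|=n$ forces $A\cup B=\{1,\dots,n\}$. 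Hence $B$ is completely determined as the complement of $A$, and the whole pair is determined by the single subset $A$.

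Finally I would count. The assignment $(M,P)\mapsto A$ is a bijection from support $\tau$-tilting pairs to subsets of $\{1,\dots,n\}$: every subset $A$ yields a valid $\tau$-rigid pair, since rigidity is automatic and $\Hom_{\Lambda}(P,M)=0$ holds by disjointness of the supports, and distinct subsets give non-isomorphic pairs. Through the Adachi--Iyama--Reiten bijection these correspond bijectively to the elements of $s\tau$-tilt $\Lambda$, so their number equals the number of subsets of an $n$-element set, namely $2^{n}$. Alternatively, and perhaps more transparently, I could argue straight from Definition~\ref{2.2}(3): for any idempotent $e$ the quotient $\Lambda/(e)$ is again semi-simple, and a $\tau$-tilting $\Lambda/(e)$-module must be basic with $|\Lambda/(e)|$ indecomposable summands, hence equal to $\Lambda/(e)$ itself; thus each choice of support gives exactly one support $\tau$-tilting module, again producing $2^{n}$ in total.

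There is no serious obstacle in this argument; the only point that requires care is to verify that the Hom-vanishing together with the cardinality condition pins down $P$ as the complement of the support of $M$, rather than permitting an independent choice of $P$. Once that is established, the count is immediate.
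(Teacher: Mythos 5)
Your proof is correct, but there is nothing in the paper to compare it against: Proposition \ref{3.2} is stated without proof and simply imported from \cite{Z2}. Your argument is the standard one and is complete. Since $\Lambda$ is semi-simple, every module is projective, so the minimal projective presentation of any $M$ has $P_{1}=0$ and $\tau M=0$; hence $\tau$-rigidity is automatic, and the indecomposables are exactly the $n$ pairwise orthogonal simples. With both $M$ and $P$ taken basic (the convention under which support $\tau$-tilting modules are counted as isomorphism classes of basic modules --- this is the one point you should state explicitly, since your step ``$|M|+|P|=n$ forces $A\cup B=\{1,\dots,n\}$'' relies on summands being multiplicity-free), the conditions $\Hom_{\Lambda}(P,M)=0$ and $|M|+|P|=n$ indeed pin the pair $(M,P)$ down to a subset $A\subseteq\{1,\dots,n\}$ and its complement, and every subset arises, giving $2^{n}$. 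Your alternative route via Definition~\ref{2.2}(3) --- each quotient $\Lambda/(e)$ is semi-simple with unique $\tau$-tilting module $\Lambda/(e)$ itself --- is equally valid and arguably cleaner, as it avoids invoking the pair formalism altogether; it is presumably close to the argument in \cite{Z2}. Note also that the empty subset correctly accounts for the zero module, which Adachi--Iyama--Reiten count as support $\tau$-tilting, so the total $2^{n}$ (rather than $2^{n}-1$) is right.
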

The following proposition on the support $\tau$-tilting modules over direct sums of algebras is essential in this paper.

\begin{proposition}\label{3.3} Let $\Lambda$ be an algebra which can be decomposed as a direct sum of two subalgebras, that is, $\Lambda=\Lambda_{1}\oplus\Lambda_{2}$.
\begin{enumerate}[\rm(1)]

\item For any $M\in\mod\Lambda$, $M$ can be decomposed as $M_1\oplus M_2$ with $M_i\in \mod\Lambda_i$ for $i=1,2$.
\item For any $M\in\mod\Lambda$ with the decomposition $M=M_1\oplus M_2$, $M$ is a support $\tau$-tilting module if both $M_1\in \mod\Lambda_{1}$ and $M_2\in\mod\Lambda_2$ are support $\tau$-tilting modules.
\item For any $M\in\mod\Lambda$ with the decomposition $M=M_1\oplus M_2$, $M$ is a $\tau$-tilting module if both $M_1\in \mod\Lambda_{1}$ and $M_2\in\mod\Lambda_2$ are $\tau$-tilting modules.
\item If $|s\tau$-tilt$\Lambda_{1}|$=$m$ and $|s\tau$-tilt$\Lambda_{2}|$=$n$, then $|s\tau$-tilt$\Lambda|$=$mn$.
\end{enumerate}
\end{proposition}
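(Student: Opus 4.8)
The plan is to prove Proposition~\ref{3.3} by exploiting the block decomposition $\Lambda=\Lambda_1\oplus\Lambda_2$, which makes $\mod\Lambda$ equivalent to the product category $\mod\Lambda_1\times\mod\Lambda_2$. Part~(1) is immediate from Proposition~\ref{2.9}(1),(2): the unique central idempotents $e_1,e_2$ with $\Lambda_i=e_i\Lambda$ split every module as $M=Me_1\oplus Me_2=M_1\oplus M_2$ with $M_i\in\mod\Lambda_i$. The key structural fact I would establish first is that \emph{all} homological invariants decompose compatibly: since $e_1,e_2$ are central orthogonal idempotents, minimal projective presentations, the Auslander--Reiten translate $\tau$, and $\Hom$-spaces all respect the decomposition. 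In particular $\tau M=\tau M_1\oplus\tau M_2$ with $\tau M_i\in\mod\Lambda_i$, and by Proposition~\ref{2.9}(3) we have $\Hom_\Lambda(M_i,M_j)=0$ for $i\neq j$.

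With this in hand, parts~(2) and~(3) reduce to a cross-term cancellation. For $\tau$-rigidity I would compute
$$\Hom_\Lambda(M,\tau M)=\bigoplus_{i,j}\Hom_\Lambda(M_i,\tau M_j),$$
and since $\tau M_j\in\mod\Lambda_j$, the off-diagonal terms $\Hom_\Lambda(M_i,\tau M_j)$ with $i\neq j$ vanish by Proposition~\ref{2.9}(3). Hence $\Hom_\Lambda(M,\tau M)=\Hom_{\Lambda_1}(M_1,\tau M_1)\oplus\Hom_{\Lambda_2}(M_2,\tau M_2)$, so $M$ is $\tau$-rigid precisely when each $M_i$ is. For part~(3), I would add the count of summands: $|M|=|M_1|+|M_2|$ and $|\Lambda|=|\Lambda_1|+|\Lambda_2|$, so if each $M_i$ is $\tau$-tilting (that is, $\tau$-rigid with $|M_i|=|\Lambda_i|$) then $M$ is $\tau$-rigid with $|M|=|\Lambda|$, hence $\tau$-tilting. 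Part~(2) follows the same pattern using support $\tau$-tilting pairs from Definition~\ref{2.4}: if $(M_i,P_i)$ is a support $\tau$-tilting pair for $\Lambda_i$, then $(M_1\oplus M_2,\,P_1\oplus P_2)$ is $\tau$-rigid by the $\Hom$-splitting (including the vanishing of $\Hom_\Lambda(P_i,M_j)$ for $i\neq j$), and $|M|+|P|=\sum_i(|M_i|+|P_i|)=|\Lambda_1|+|\Lambda_2|=|\Lambda|$.

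Part~(4) is the counting statement and follows from part~(2) once I argue that the assignment $(M_1,M_2)\mapsto M_1\oplus M_2$ is a bijection between $s\tau\text{-tilt}\,\Lambda_1\times s\tau\text{-tilt}\,\Lambda_2$ and $s\tau\text{-tilt}\,\Lambda$. Surjectivity is part~(1) together with the observation that each direct summand $M_i$ of a support $\tau$-tilting $M$ is itself support $\tau$-tilting over $\Lambda_i$ (the converse direction of the $\Hom$-splitting forces each diagonal term to vanish and each summand count to be maximal). Injectivity is clear since the decomposition $M=Me_1\oplus Me_2$ is unique. Therefore $|s\tau\text{-tilt}\,\Lambda|=|s\tau\text{-tilt}\,\Lambda_1|\cdot|s\tau\text{-tilt}\,\Lambda_2|=mn$.

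The main obstacle, and the point I would be most careful about, is verifying that $\tau$ genuinely respects the block decomposition so that $\tau M_i$ lands in $\mod\Lambda_i$; this is what makes the off-diagonal $\Hom$-terms vanish via Proposition~\ref{2.9}(3). I would justify it by noting that a minimal projective presentation of $M_i$ in $\mod\Lambda$ uses only projectives in $\add(e_i\Lambda)$, so applying the Nakayama-functor description of $\tau$ keeps everything inside the block $\Lambda_i$. The converse implication in part~(4)---that every support $\tau$-tilting $\Lambda$-module decomposes into support $\tau$-tilting pieces rather than merely $\tau$-rigid ones---also deserves a sentence confirming that maximality of $|M|=|\Lambda|$ transfers summand-wise, which it does because $|\Lambda|=|\Lambda_1|+|\Lambda_2|$ and each $M_i$ is automatically $\tau$-rigid with $|M_i|\le|\Lambda_i|$, forcing equality on each block.
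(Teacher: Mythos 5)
Your proof is correct, but it takes a genuinely different route from the paper's at two points. For the $\tau$-rigidity of $M_1\oplus M_2$ (parts (2)--(3)), the paper never touches $\tau$ itself: it forms the direct sum of the minimal projective presentations, applies $\Hom_\Lambda(-,M_1\oplus M_2)$, and invokes the surjectivity criterion of Lemma~\ref{2.3}, thereby avoiding any need to check that $\tau$ respects the block decomposition. You instead compute $\Hom_\Lambda(M,\tau M)$ directly, which requires exactly the compatibility $\tau M_i\in\mod\Lambda_i$ that you flag and justify via the Nakayama functor; this is sound, and it yields the stronger statement that $M$ is $\tau$-rigid \emph{if and only if} each $M_i$ is, which the paper does not prove. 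The larger divergence is part (4): the paper constructs no bijection at all, but observes that by (2) the $mn$ modules $M_1\oplus M_2$ give a finite connected component of $Q(s\tau\text{-tilt}\,\Lambda)$ and concludes by Lemma~\ref{2.5} that this component is the whole quiver, hence $|s\tau\text{-tilt}\,\Lambda|=mn$. Your bijection argument buys two things: it does not need $m$ and $n$ to be finite, and it makes explicit the converse decomposition (every support $\tau$-tilting $\Lambda$-module splits blockwise into support $\tau$-tilting pieces), which the paper's mutation-quiver argument sidesteps (indeed the paper leaves the connectedness and mutation-closure of that component implicit). The price is that your surjectivity step quietly uses the standard AIR bound that a $\tau$-rigid pair $(N,Q)$ over an algebra $B$ satisfies $|N|+|Q|\le|B|$ --- note it must be this bound on the pair, not merely $|M_i|\le|\Lambda_i|$ as you wrote, since the projective part $P_i$ enters the count --- a well-known fact, but one not among the paper's stated preliminaries, whereas the paper's route uses only results it has already recorded (Lemmas~\ref{2.3} and~\ref{2.5} and Proposition~\ref{2.9}).
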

\begin{proof}
(1) This is a straight result of Proposition \ref{2.9}.

(2) We divide the proof into three steps.

(a) We show $M_1\oplus M_2$ is $\tau$-rigid in $\mod \Lambda$.

Let $P_{1}\stackrel{d_{1}}{\longrightarrow} P_{0}\stackrel{d_{0}}{\longrightarrow} M_1\longrightarrow 0$ be a minimal projective presentation of $M_1\in\mod\Lambda_1$ and let $Q_{1}\stackrel{d_{1}^{'}}{\longrightarrow} Q_{0}\stackrel{d_{0}^{'}}{\longrightarrow} M_2 \longrightarrow 0$  be a minimal projective presentation of $M_2 \in\mod\Lambda_2$. By Proposition \ref{2.9}, one gets a minimal projective presentation of $M=M_1\oplus M_2$ as follows:

$P_{1}\oplus Q_{1} \stackrel{d_{1}\oplus d_{1}^{'}}{\longrightarrow} P_{0}\oplus Q_{0}\stackrel{d_{0}\oplus d_{0}^{'}}{\longrightarrow} M_1\oplus M_2 \longrightarrow 0\ \ \ \ \ \ \ (1)$

Since $M_1$ is support $\tau$-tilting module, then the map $\Hom_{\Lambda_{1}}(P_{0},M_1)\stackrel{(d_{1},M_1)}{\longrightarrow} \Hom_{\Lambda_{1}}(P_{1},M_1)$ is surjective
 by Lemma \ref{2.3}. Similarly, one gets the map $\Hom_{\Lambda_{2}}(Q_{0},M_2)\stackrel{(d_{1}^{'},M_2)}{\longrightarrow}\Hom_{\Lambda_{2}}(Q_{1},M_2)$ is surjective since $M_2$ is support $\tau$-tilting module.  Applying $\Hom_{\Lambda}(-,M_1\oplus M_2)$ to $(1)$, by Proposition \ref{2.9} we have the following exact sequence

 $ 0\longrightarrow \Hom_{\Lambda}(M_1\oplus M_2, M_1\oplus M_2)\stackrel{F_{0}}\longrightarrow \Hom_{\Lambda}(P_{0}\oplus Q_{0},M_1\oplus M_2)\stackrel{F_{1}}{\longrightarrow} \Hom_{\Lambda}(P_{1}\oplus Q_{1},M_1\oplus M_2),$

  \noindent where $F_{i}=(d_{i},M_1)\oplus(d_{i}^{'}, M_2)$ for $i=0,1$. Notice that both the map $(d_{1},M_1)$ and the map $(d_{1}^{'},M_2)$ are surjective, then $(d_{1},M_1)\oplus (d_{1}^{'},M_2)$ is surjective. By Lemma \ref{2.3}, then $M_1\oplus M_2$ is $\tau$-rigid.

(b) Denote by $(M_1,P)$ and $(M_2,Q)$ the support $\tau$-tilting pair in mod$\Lambda_{1}$ and mod$\Lambda_{2}$. We show that $(M_1\oplus M_2,P\oplus Q)$ is $\tau$-rigid pair in mod$\Lambda$.

Since $(M_1,P)$ is a support $\tau$-tilting pair in $\mod \Lambda_{1}$, then $\Hom_{\Lambda_{1}}(P,M_1)=0$ holds. Similarly, one gets $\Hom_{\Lambda_{2}}(Q,M_2)=0$. Then by Proposition \ref{2.9}, $\Hom_{\Lambda}(P\oplus Q,M_1\oplus M_2)\cong \Hom_{\Lambda_{1}}(P,M_1)\oplus \Hom_{\Lambda_{2}}(Q,M_2)\cong 0$, so $(M_1\oplus M_2,P\oplus Q)$ is $\tau$-rigid pair in mod$\Lambda$ by (a).

(c) We show $(M_1\oplus M_2,P\oplus Q)$ is a support $\tau$-tilting pair.

Since $(M_1,P)$ is a support $\tau$-tilting pair, one gets $|M_1|+|P|=|\Lambda_{1}|$. Similarly, the fact $(M_2,Q)$ is a support $\tau$-tilting pair$\in \mod \Lambda_{2}$ implies $|M_2|+|Q|=|\Lambda_{2}|$. So one gets $|M_1|+|M_2|+|P|+|Q|=|\Lambda_{1}|+|\Lambda_{2}|=|\Lambda|$. Then by (b) $(M_1\oplus M_2,P\oplus Q)$ is a support $\tau$-tilting pair in mod $\Lambda$. And hence $M_1\oplus M_2$ is a support $\tau$-tilting module.

(3)This is a straight result of (2).

(4)  Let $|s\tau$-tilt$\Lambda_{1}|$=$m$, $|s\tau$-tilt$\Lambda_{2}|$=$n$, by (2) one gets a finite connected component $\mathcal{C}$ of the quiver of $Q(s\tau$-tilt$\Lambda)$. Then by Lemma \ref{2.5}, we get that $|s\tau$-tilt$\Lambda|$=$mn$.
\end{proof}

The following lemma is also useful.

\begin{lemma}\label{3.4}Let $\Lambda$ be a algebra with $Q$:  $$\xymatrix{ & 4\ar[d] & \\ & 3 &\ar[l]5}$$  Then the number of support $\tau$-tilting $\Lambda$-modules is $14$.
\end{lemma}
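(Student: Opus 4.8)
The plan is first to identify $\Lambda$ concretely. Since vertex $3$ is a sink and vertices $4,5$ are sources, the quiver $Q$ admits no path of length $\ge 2$, so $\Lambda=KQ$ is hereditary with underlying graph the $A_3$ path $4-3-5$. Hence $\Lambda$ is a representation-finite hereditary algebra of Dynkin type $A_3$, having exactly six indecomposable modules: the three simples $S_3,S_4,S_5$, the two length-two projectives $P_4$ (top $S_4$, socle $S_3$) and $P_5$ (top $S_5$, socle $S_3$), and the three-dimensional injective $I_3$ with dimension vector $(1,1,1)$. Each indecomposable $X$ is rigid ($\Ext^1_\Lambda(X,X)=0$), hence $\tau$-rigid because $\Lambda$ is hereditary; so every support $\tau$-tilting module is a direct sum of a subset of these six modules, and the enumeration is finite from the outset.

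My primary route is to build the support $\tau$-tilting quiver $Q(\sttilt\Lambda)$ by mutation. Starting from the projective generator $\Lambda=P_3\oplus P_4\oplus P_5$, which is $\tau$-tilting, I would repeatedly form left and right mutations, at each step verifying $\tau$-rigidity of the candidate via Lemma \ref{2.3} applied to the relevant minimal projective presentations. Once the mutation process closes up into a finite connected component, Lemma \ref{2.5} forces this component to be all of $Q(\sttilt\Lambda)$; counting its vertices then gives $14$.

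As an independent check I would organize $\sttilt\Lambda$ by support: it is the disjoint union, over idempotents $e$, of the $\tau$-tilting $\Lambda/(e)$-modules, and over a hereditary algebra these coincide with the classical tilting modules. The full support (type $A_3$) contributes its $5$ tilting modules; the two sub-paths of type $A_2$ on $\{4,3\}$ and on $\{3,5\}$ contribute $2$ each; and the semisimple restriction to $\{4,5\}$, the three single-vertex restrictions, and the empty support each admit a unique full-support $\tau$-tilting module (the regular representation). This gives $5+2+2+1+1+1+1+1=14$, matching the mutation count.

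The step I expect to be the real work is organizational rather than conceptual: one must enumerate the nodes of the mutation quiver (equivalently, the maximal $\tau$-rigid pairs) \emph{exhaustively and without repetition}, since the value $14$ must be exact to feed correctly into the $2^{m-3}\times 14$ formula of Theorem \ref{1.1}. In particular, the $\tau$-rigidity of the sums built from $P_4$, $P_5$ and $I_3$ — where the vanishing of $\Hom_\Lambda(-,\tau-)$ is not visible by inspection — is the one place where Lemma \ref{2.3} must be applied with care.
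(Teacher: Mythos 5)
Your proposal is correct, and its primary route is essentially the paper's own proof: the paper simply exhibits the quiver $Q(\sttilt\Lambda)$ with its $14$ vertices and invokes Lemma \ref{2.5} to conclude that this finite connected component is everything, which is exactly your mutation plan. What is genuinely different — and in fact the only part of your write-up that \emph{produces} the number $14$ rather than promising it — is your support-stratified count: you recognize that $Q$ has no paths of length two, so $\Lambda$ is hereditary of type $A_3$ and every quotient $\Lambda/(e)$ is again hereditary, whence support $\tau$-tilting modules with support $e$ are classical tilting $\Lambda/(e)$-modules; summing $5+2+2+1+1+1+1+1=14$ then recovers the type-$A_3$ cluster number. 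This explains conceptually \emph{why} the answer is $14$ and would generalize to other quivers, whereas the paper's explicit $14$-vertex quiver has the side benefit of displaying the modules themselves, which the paper reuses when listing the $28$ and $112$ support $\tau$-tilting modules in Examples \ref{3.9} and \ref{3.10}. One point in your argument deserves to be an actual computation rather than an assertion: the claim that this (middle-sink) orientation of $A_3$ has exactly $5$ tilting modules. It is true and quick to verify via the AR formula $\Ext^1_\Lambda(X,Y)\cong D\Hom_\Lambda(Y,\tau X)$: with indecomposables $S_3,S_4,S_5,P_4,P_5,I_3$ the forbidden pairs are $\{S_3,S_4\},\{S_3,S_5\},\{S_3,I_3\},\{S_4,P_5\},\{S_5,P_4\}$, so the only rigid triple containing $S_3$ is $\Lambda=S_3\oplus P_4\oplus P_5$, and exactly four triples avoid $S_3$, giving $5$. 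Since the constant $14$ feeds directly into the $2^{m-3}\times 14$ count of Theorems \ref{3.6} and \ref{3.7}, that verification should appear explicitly in a final version; with it included, your argument is complete.
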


\textbf{Proof}. In what follows, we denote a module by its composition factors. Now we can draw the quiver $Q$($s\tau$-tilt$\Lambda$) as follows.
$$\begin{xy}
0;<3.4pt,0pt>:<0pt,2.5pt>::
(0,0)
*+{ \left[\begin{smallmatrix} 3 \end{smallmatrix}\middle| \begin{smallmatrix} 4\\ &3 \end{smallmatrix}\middle| \begin{smallmatrix} &5\\ 3
\end{smallmatrix}\right]}="1", (-30,-16)
*+{\left[\begin{smallmatrix} 4&&5\\ &3
\end{smallmatrix}\middle| \begin{smallmatrix} 4\\ &3 \end{smallmatrix}\middle| \begin{smallmatrix} &5\\ 3 \end{smallmatrix}\right]}="2", (0,-16)
*+{\left[\begin{smallmatrix} 3 \end{smallmatrix}\middle| \begin{smallmatrix}  \end{smallmatrix}\middle| \begin{smallmatrix} &5\\ 3 \end{smallmatrix}\right]}="3", (30,-16)
*+{\left[\begin{smallmatrix} 3 \end{smallmatrix}\middle| \begin{smallmatrix} 4\\ &3 \end{smallmatrix}\middle| \begin{smallmatrix} \end{smallmatrix}\right]}="4", (-40,-32)
*+{\left[\begin{smallmatrix} 4&&5\\ &3 \end{smallmatrix}\middle|
\begin{smallmatrix} 5 \end{smallmatrix}\middle|
\begin{smallmatrix} &5\\ 3 \end{smallmatrix}\right]}="5", (-20,-32)
*+{\left[\begin{smallmatrix} 4&&5\\&3 \end{smallmatrix}\middle|
\begin{smallmatrix} 4\\ &3  \end{smallmatrix}\middle|
\begin{smallmatrix} 4 \end{smallmatrix}\right]}="6", (-40,-48)
*+{\left[\begin{smallmatrix} 4&&5\\ &3 \end{smallmatrix}\middle| \begin{smallmatrix} 5
 \end{smallmatrix}\middle| \begin{smallmatrix} 4 \end{smallmatrix}\right]}="7", (-25,-48)
*+{\left[\begin{smallmatrix} \end{smallmatrix}\middle|
\begin{smallmatrix} 5 \end{smallmatrix}\middle|\begin{smallmatrix} 4 \end{smallmatrix} \right]}="8",
(-7,-48)
*+{\left[\begin{smallmatrix}  \end{smallmatrix}\middle| \begin{smallmatrix} 5 \end{smallmatrix}\middle| \begin{smallmatrix}
&5\\3 \end{smallmatrix}\right]}="9", (30,-48)
*+{\left[\begin{smallmatrix} \end{smallmatrix}\middle| \begin{smallmatrix} 4\\&3 \end{smallmatrix}\middle| \begin{smallmatrix} 4 \end{smallmatrix}\right]}="10", (-30,-64)
*+{\left[\begin{smallmatrix}
\end{smallmatrix}\middle| \begin{smallmatrix}  \end{smallmatrix}\middle| \begin{smallmatrix} 5\end{smallmatrix}\right]}="11", (0,-64)
*+{\left[\begin{smallmatrix}
3\end{smallmatrix}\middle| \begin{smallmatrix}  \end{smallmatrix}\middle| \begin{smallmatrix}\end{smallmatrix}\right]}="12", (30,-64)
*+{\left[\begin{smallmatrix} \end{smallmatrix}\middle| \begin{smallmatrix}
4\end{smallmatrix}\middle| \begin{smallmatrix}  \end{smallmatrix}\right]}="13", (0,-80)
*+{\left[\begin{smallmatrix}  \end{smallmatrix}\middle| \begin{smallmatrix} \end{smallmatrix}\middle| \begin{smallmatrix}
\end{smallmatrix}\right]}="14",
\ar"1";"2",
\ar"1";"3", \ar"1";"4", \ar"2";"5", \ar"2";"6", \ar"5";"7",
\ar"5";"9", \ar"6";"7", \ar"6";"10", \ar"3";"9", \ar"3";"12",
\ar"4";"10", \ar"4";"12", \ar"7";"8", \ar"9";"11", \ar"10";"13",
\ar"8";"11", \ar"8";"13", \ar"11";"14",\ar"12";"14", \ar"13";"14",
 \end{xy}$$
By Lemma \ref{2.5}, then we can get the number of support $\tau$-tilting $\Lambda$-modules is $14$.

The following result on the number of tilting modules over the Auslander algebras of radical square zero algebras of type $A_n$ has been shown in \cite {Z2}.

\begin{theorem}\label{3.5} Let $\Gamma$ be the Auslander algebra of a radical square zero of type $A_{m}$ with $m\geq 1$. Then the number of tilting $\Gamma$-modules is $2^{m-1}$.
\end{theorem}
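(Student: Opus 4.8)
The plan is to convert the problem into a support $\tau$-tilting count by the bijection of Theorem \ref{2.7} and then to show that the algebra appearing on the $\tau$-tilting side is semisimple. Since $\Gamma$ is an Auslander algebra, Theorem \ref{2.7} gives a bijection between $\tilt\Gamma$ and $s\tau$-tilt$(\Gamma/(e))$, where $e\Gamma$ is the additive generator of the projective-injective $\Gamma$-modules. Thus it is enough to identify the idempotent $e$ and then to compute the quotient $\Gamma/(e)$.

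First I would determine the indecomposable projective and injective $\Gamma$-modules directly from the quiver $Q_1$ and its relations given in Proposition \ref{3.1}(1). Writing $a_i\colon i+1\to i$, the relations $a_{2k-1}a_{2k}=0$ annihilate exactly the length-two paths $(2k+1)\to 2k\to(2k-1)$ passing through the even vertices, so every nonzero path has length at most $2$; reading off the paths ending at, respectively starting at, each vertex then yields each $P_v$ and each $I_v$ as a (truncated) uniserial module. Comparing the two lists, I expect to find that $P_v$ is injective precisely for $v\in\{1,2,4,6,\dots,2m-2\}$, i.e.\ for $P_1$ together with the projectives at the even vertices; this produces exactly $m$ indecomposable projective-injectives, consistent with the standard fact that over an Auslander algebra they correspond to the $m$ indecomposable injective $\Lambda$-modules. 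Hence $e=e_1+e_2+e_4+\cdots+e_{2m-2}$.

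The decisive step is the computation of $\Gamma/(e)$. The vertices surviving in $\Gamma/(e)$ are the odd vertices $3,5,\dots,2m-1$, of which there are $m-1$. Because each arrow $a_i$ of $Q_1$ joins the two consecutive vertices $i$ and $i+1$, at least one of which lies in $\{1,2,4,\dots,2m-2\}$, every arrow is incident to a deleted vertex; therefore no arrows remain and $\Gamma/(e)$ is semisimple with $m-1$ simple modules. Proposition \ref{3.2} then gives $|s\tau$-tilt$(\Gamma/(e))|=2^{m-1}$, and applying Theorem \ref{2.7} shows that the number of tilting $\Gamma$-modules is $2^{m-1}$.

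The main obstacle is the bookkeeping in the second paragraph: matching the projectives $P_v$ against the injectives $I_w$ to pin down exactly which vertices are projective-injective, with due care at the boundary vertices $v=1,2,2m-2,2m-1$, where the uniserial modules are truncated, together with a separate check of the degenerate cases $m=1,2$. Once the projective-injective vertices are correctly identified, the semisimplicity of $\Gamma/(e)$ is immediate from the shape of $Q_1$, and the desired count follows formally from Proposition \ref{3.2} and Theorem \ref{2.7}.
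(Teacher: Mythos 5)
Your proposal is correct, and it is worth noting that the paper itself gives no proof of this theorem: it simply cites \cite{Z2}, where the case of radical square zero Nakayama algebras (which includes the linearly oriented $A_m$) was treated. What you have written is essentially a self-contained reconstruction of that argument, and it follows exactly the template the paper uses for types $D_m$ and $E_m$ in Theorems \ref{3.6} and \ref{3.7}: identify the projective-injective vertices of the Auslander algebra, pass to $\Gamma/(e)$ via the bijection of Theorem \ref{2.7}, observe that the quotient is semisimple, and count with Proposition \ref{3.2}. So your route is the right one, and arguably more informative than the paper's citation.

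One caveat on the bookkeeping step you flagged. Which vertices carry the projective-injectives depends on the path/module convention. You take $P_v$ to be spanned by paths ending at $v$, which yields your set $\{1,2,4,\dots,2m-2\}$; but the paper's own computations in type $D$ (e.g.\ $P(2)=I(1)$, with the simple projective at the source vertex $1$ \emph{not} injective, and $e$ containing the sink-end idempotents $e_{2m-1},e_{2m}$) show it uses the opposite convention, under which the projective-injective vertices of $Q_1$ are $\{2,4,\dots,2m-2\}\cup\{2m-1\}$. The two answers are exchanged by the quiver automorphism $v\mapsto 2m-v$ of $(Q_1,\text{relations})$, i.e.\ $\Gamma\cong\Gamma^{\op}$ here, so nothing is at stake for the count: in either case $\Gamma/(e)$ lives on $m-1$ odd vertices, every arrow of $Q_1$ meets a deleted vertex, the quotient is semisimple, and Proposition \ref{3.2} together with Theorem \ref{2.7} gives $2^{m-1}$ tilting $\Gamma$-modules, also in the degenerate cases $m=1,2$. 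If you write this up, just fix one convention and use it consistently, since as stated your list of projective-injectives contradicts the convention implicit in the paper's Theorem \ref{3.6}.
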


For a vertex $i$ in a quiver $Q$, we denote by $P(i)$,$I(i)$ and $S(i)$ the indecomposable projective, injective and simple module according to the $i$. Now we are in a position to show the following part of our main results.

\begin{theorem}\label{3.6} Let $\Lambda$ be a radical square zero algebra of type $D_m$ with $m\geq 4$ and let $\Gamma$ be the Auslander algebra of $\Lambda$. Then the number of tilting $\Gamma$-module is $2^{m-3}\times 14$.
\end{theorem}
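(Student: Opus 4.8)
The plan is to convert the count of tilting modules into a count of support $\tau$-tilting modules via Theorem \ref{2.7}, and then to exploit a block decomposition of the resulting quotient. By Proposition \ref{3.1}(2) the algebra $\Gamma$ is the path algebra of $Q_2$ modulo the listed relations, and by Theorem \ref{2.7} the number of tilting $\Gamma$-modules equals $|s\tau\text{-tilt}\,\Gamma/(e)|$, where $e\Gamma$ is the additive generator of the projective-injective $\Gamma$-modules. So the first task is to pin down $e$. Since $\Gamma=\End_\Lambda M$ for the additive generator $M$ of $\mod\Lambda$, the indecomposable projective-injective $\Gamma$-modules are exactly the modules $\Hom_\Lambda(M,I)$ with $I$ an indecomposable injective $\Lambda$-module; hence $e$ is the sum of the primitive idempotents of $\Gamma$ attached to the vertices of $Q_2$ that correspond to the indecomposable injective $\Lambda$-modules.

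First I would list the indecomposables of the radical square zero $D_m$-algebra $\Lambda$ and read off the injective ones. A direct inspection of the Auslander--Reiten quiver of $\Lambda$, whose underlying graph is that of $Q_2$, shows that the indecomposables are the simples $S(1),\dots,S(m)$, the length-two modules $P(1),\dots,P(m-1)$, and the length-three module $I(3)$ with socle $S(3)$ and top $S(1)\oplus S(2)$. Among these the injectives are $S(1)=I(1)$, $S(2)=I(2)$, $I(3)$, and the projective-injective modules $P(3),\dots,P(m-1)$ (where $P(k)=I(k+1)$). Matching these with the labelling of $Q_2$, the injective $\Lambda$-modules sit at the vertices $2,4,\dots,2m-6$ together with $2m-2,2m-1,2m$; there are $(m-3)+3=m$ of them, as it must be. Thus $e=e_2+e_4+\cdots+e_{2m-6}+e_{2m-2}+e_{2m-1}+e_{2m}$, and $\Gamma/(e)$ is supported on the remaining vertices $1,3,5,\dots,2m-5,2m-4,2m-3$.

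Next I would compute the quiver with relations of $\Gamma/(e)$ by deleting the above vertices together with every arrow incident to them. Every arrow of the linear tail $a_1,\dots,a_{2m-6}$ joins an odd to an even vertex, so all of them disappear; the only surviving arrows are $a_{2m-5}\colon 2m-4\to 2m-5$ and $a_{2m-4}\colon 2m-3\to 2m-5$, and each defining relation of $\Gamma$ contains at least one deleted arrow, hence dies. Consequently $\Gamma/(e)$ decomposes as a direct sum of $m-3$ copies of the base field $K$ (the isolated vertices $1,3,\dots,2m-7$) together with the path algebra $C$ of the quiver $2m-4\to 2m-5\leftarrow 2m-3$, which is precisely the algebra of Lemma \ref{3.4}. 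Then Proposition \ref{3.2} gives $|s\tau\text{-tilt}\,K|=2$, Lemma \ref{3.4} gives $|s\tau\text{-tilt}\,C|=14$, and applying Proposition \ref{3.3}(4) repeatedly yields $|s\tau\text{-tilt}\,\Gamma/(e)|=2^{m-3}\times 14$, which by Theorem \ref{2.7} is the number of tilting $\Gamma$-modules.

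The main obstacle is the bookkeeping in the middle step: correctly identifying the injective $\Lambda$-modules and their positions in $Q_2$, so that the surviving part of $\Gamma/(e)$ is exactly the three-vertex quiver of Lemma \ref{3.4} with no leftover relations. Everything after that is a formal application of Theorem \ref{2.7}, Proposition \ref{3.2}, Lemma \ref{3.4}, and the multiplicativity in Proposition \ref{3.3}(4). The base case $m=4$, where $\Gamma/(e)=K\oplus C$ and the count is $2\times 14=28$, serves as a useful consistency check.
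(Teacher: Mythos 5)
Your proposal is correct and takes essentially the same route as the paper: you choose the same idempotent $e=e_{2}+e_{4}+\cdots+e_{2m-6}+e_{2m-2}+e_{2m-1}+e_{2m}$, obtain the same decomposition of $\Gamma/(e)$ into a semisimple algebra on $m-3$ vertices plus the three-vertex algebra of Lemma \ref{3.4}, and conclude by the same combination of Theorem \ref{2.7}, Proposition \ref{3.2}, Proposition \ref{3.3} and Lemma \ref{3.4}. The only difference is that you supply more detail than the paper does, identifying the projective-injective $\Gamma$-modules through the injective $\Lambda$-modules and verifying explicitly that no relations survive in the quotient.
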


\textbf{Proof}. By Proposition \ref{3.1}, we can get the quiver of the algebra $\Gamma$. We also get the indecomposable projective-injective modules are as follows:$P(2)=I(1), P(4)=I(2), P(6)=I(4),\cdots,P(2m-2)=I(2m-6), P(2m-1)=I(2m-3), P(2m)=I(2m-4)$. Take the idempotent $e=e_{2}+e_{4}+e_{6}+\cdots +e_{2m-6} +e_{2m-2}+e_{2m-1}+e_{2m}$. Then $\Gamma/(e)$ is a direct sum of a semi-simple algebra with $m-3$ vertices and an algebra in Lemma \ref{3.4}. Then by Theorem \ref{2.7} , Proposition \ref{3.2}, Proposition \ref{3.3} and Lemma \ref{3.4}, one can get the number of the tilting modules in $\mod\Gamma$ is $2^{m-3}\times 14=112$.

Now we show the number of tilting modules over the Auslander algebras of radical square zero algebras of type $E_m$ for $m=6,7,8$.

\begin{theorem}\label{3.7} Let $\Lambda$ be a radical square zero algebra of type $E_m$ with m=6,7,8. Let $\Gamma$ be the Auslander algebra of $\Lambda$. Then the number of tilting $\Gamma$-module is $2^{m-3}\times 14$.
\end{theorem}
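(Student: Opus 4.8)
The plan is to follow the proof of Theorem~\ref{3.6} verbatim in structure, treating $m=6,7,8$ in parallel and only replacing the quiver $Q_2$ by the quivers $Q_6,Q_7,Q_8$ of Proposition~\ref{3.1}. The whole argument reduces to producing one idempotent $e\in\Gamma$ with $e\Gamma$ the additive generator of the projective-injective $\Gamma$-modules, and then checking that $\Gamma/(e)$ is a direct sum of a semisimple algebra with $m-3$ simples and one copy of the algebra of Lemma~\ref{3.4}. Granting this, Theorem~\ref{2.7}, Proposition~\ref{3.2}, Lemma~\ref{3.4} and Proposition~\ref{3.3}(4) combine to give $2^{m-3}\times 14$.

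First I would locate the $m$ projective-injective $\Gamma$-modules. The $2m$ vertices of $\Gamma$ are the indecomposable $\Lambda$-modules, and $P(i)$ is projective-injective precisely when its vertex is an injective $\Lambda$-module; since $\Lambda$ has $m$ simples there are exactly $m$ of them. Concretely I would knit the Auslander--Reiten quiver of the radical square zero algebra $\Lambda$ of type $E_m$, match it with the relevant $Q_m$, and read off the injective vertices. For $E_6$ this identifies the projective-injectives as $P(2),P(4),P(8),P(9),P(11),P(12)$, so one takes $e=e_2+e_4+e_8+e_9+e_{11}+e_{12}$; the branch vertex (the tip of the diamond carrying the commutativity relation $a_5a_7=a_6a_8$) is the image of the injective $I(4)$ of $\Lambda$ and is always among the deleted vertices, while its opposite tip survives. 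The cases $E_7,E_8$ are obtained by the same knitting with a longer linear tail.

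Second I would compute $\Gamma/(e)$ by deleting the chosen vertices from $Q_m$ and keeping only arrows and relations whose endpoints all survive. For $E_6$ the surviving vertices are $\{1,3,5,6,7,10\}$; the only arrows left are $a_5:6\to 5$ and $a_6:7\to 5$, and the relation $a_5a_7=a_6a_8$ disappears because $a_7,a_8$ are deleted. By Proposition~\ref{2.9} this splits $\Gamma/(e)$ as the semisimple algebra on the isolated vertices $\{1,3,10\}$ together with the quiver $6\to 5\leftarrow 7$, which is exactly the algebra of Lemma~\ref{3.4}. For $E_7$ the survivors are $\{1,3,5,7,8,9,12\}$ with only $a_7:8\to 7$ and $a_8:9\to 7$ remaining, giving a semisimple part on $4=m-3$ vertices and the block $8\to 7\leftarrow 9$; $E_8$ is identical with a semisimple part on $5=m-3$ vertices.

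The main obstacle is exactly the bookkeeping in the first two steps: correctly matching the $2m$ vertices of $Q_m$ to the indecomposable $\Lambda$-modules so that the $m$ injective ones are pinned down, and then verifying that after deletion no surviving arrow or relation connects the semisimple part to the $A_3$-block, so that Proposition~\ref{2.9} really yields a genuine direct product. Once the decomposition $\Gamma/(e)\cong(\text{semisimple with }m-3\text{ simples})\oplus(\text{algebra of Lemma~\ref{3.4}})$ is in place, the count $|\mathrm{tilt}\,\Gamma|=|s\tau\text{-tilt}(\Gamma/(e))|=2^{m-3}\times 14$ follows formally from Theorem~\ref{2.7}, Proposition~\ref{3.2}, Lemma~\ref{3.4} and Proposition~\ref{3.3}(4).
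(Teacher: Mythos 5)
Your proposal is correct and takes essentially the same route as the paper's proof: the same idempotent (e.g.\ $e=e_{2}+e_{4}+e_{8}+e_{9}+e_{11}+e_{12}$ for $E_{6}$, and the analogous choices for $E_{7},E_{8}$), the same decomposition of $\Gamma/(e)$ into a semisimple algebra on $m-3$ vertices plus the algebra of Lemma \ref{3.4}, and the same final count via Theorem \ref{2.7}, Propositions \ref{3.2} and \ref{3.3}, and Lemma \ref{3.4}. Your explicit check that after deleting the projective-injective vertices only the arrows $6\to 5\leftarrow 7$ (resp.\ $8\to 7\leftarrow 9$, $10\to 9\leftarrow 11$) survive is actually more detailed than the paper, which merely asserts the decomposition.
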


\textbf{Proof}. If $\Lambda$ is of type $E_6$, then  by Proposition \ref{3.1}, we get the quiver of $\Gamma$. Moreover, one can get the indecomposable projective-injective modules are as follows:$P(2)=I(1), P(4)=I(2), P(8)=I(4), P(9)=I(7), P(11)=I(6), P(12)=I(11)$. Take the idempotent $e=e_{2}+e_{4}+e_{8} +e_{9} +e_{11}+e_{12}$. Then the quotient algebra $\Gamma/(e)$ is a direct sum of a semi-simple algebra with $3$ vertices and an algebra in Lemma \ref{3.4}. Then by Theorem \ref{2.7} , Proposition \ref{3.2}, Proposition \ref{3.3} and Lemma \ref{3.4}, we get the number of the tilting modules $\mod\Gamma$ is $2^{3}\times14=112$.

If $\Lambda$ is of type $E_7$, then by Proposition \ref{3.1}, we get the quiver of $\Gamma$. Moreover, one can get the indecomposable projective-injective modules are as follows: $P(2)=I(1), P(4)=I(2), P(6)=I(4), P(10)=I(6), P(11)=I(9), P(13)=I(8), P(14)=I(13)$. Take the idempotent $e=e_{2}+e_{4}+e_{6} +e_{10} +e_{11}+e_{13}+e_{14}$. And hence the quotient algebra $\Gamma/(e)$ is a direct sum of a semi-simple algebra with $4$ vertices and an algebra in Lemma \ref{3.4}. Then by Theorem \ref{2.7} , Proposition \ref{3.2}, Proposition \ref{3.3} and Lemma \ref{3.4}, we get the number of tilting modules in $\mod\Gamma$  is $2^{4}\times14=224$.

If $\Lambda$ is of type $E_8$, then by Proposition \ref{3.1}, we get the quiver of $\Gamma$. Moreover, one gets the indecomposable projective-injective modules are as follows:$P(2)=I(1), P(4)=I(2), P(6)=I(4), P(8)=I(6), P(12)=I(8), P(13)=I(11), P(15)=I(10), P(16)=I(15)$. Take the idempotent $e=e_{2}+e_{4}+e_{6} +e_{8}+e_{12} +e_{13}+e_{15}+e_{16}$. So the quotient algebra $\Gamma/(e)$ is a direct sum of a semi-simple algebra with $5$ vertices and an algebra in Lemma \ref{3.4}. Then by Theorem \ref{2.7} , Proposition \ref{3.2}, Proposition \ref{3.3} and Lemma \ref{3.4}, we can get the number of the tilting modules in the algebra $\Gamma$  is  $2^{5}\times14=448$.

At the end of this paper, we give some examples to show our main results.

\begin{example}\label{3.8}
Let $\Lambda$ be a radical square zero algebra of type $A_3$. Then Auslander algebra $\Gamma$ of $\Lambda$ is given by the quiver $Q: 1\stackrel{u_{1}}{\longleftarrow} 2\stackrel{u_{2}}{\longleftarrow} 3\stackrel{u_{3}}{\longleftarrow} 4\stackrel{u_{4}}{\longleftarrow} 5$. with the relations: $u_{1} u_{2}=0$,  $u_{3} u_{4}=0$.
  Then the tilting $\Gamma$-modules are follows:
    $$T_{1}=\Gamma ,   T_{2}=P(5)\oplus P(4)\oplus S(4)\oplus P(2)\oplus P(1)$$
    $$T_{3}=P(5)\oplus P(4)\oplus P(3)\oplus P(2)\oplus S(2)$$
    $$T_{4}=P(5)\oplus P(4)\oplus S(4)\oplus P(2)\oplus S(2)$$
    $$ 2^{m-1}=2^{3-1}=4$$
\end{example}

\begin{example}\label{3.9}
Let $\Lambda$ be a radical square zero algebra of type $D_4$. Then Auslander algebra $\Gamma$ is given by the quiver $Q$:$$ \xymatrix{& & & &4\ar[ld]_{u_{3}} & & 7\ar[ld]_{u_{7}} \\ & 1& 2 \ar[l]_{u_{1}} & 3\ar[l]_{u_{2}} & & 6 \ar[lu]_{u_{5}}\ar[ld]_{u_{6}} &  \\ & & & & 5\ar[lu]_{u_{4}}& & 8\ar[lu]_{u_{8}} }$$
with the relations: $u_{1} u_{2}=0,  u_{3}u_{5}= u_{4} u_{6},  u_{5} u_{7}=0, u_{6} u_{8}=0 $.

The number of support $\tau$ tilting $\Gamma/(e)$-modules are as follows:
$$\mathrm{T_{1}}=1\oplus5, \mathrm{T_{2}}=0\oplus5$$
$$\mathrm{T_{3}}=1\oplus3, \mathrm{T_{4}}=0\oplus3$$
$$\mathrm{T_{5}}=1\oplus4, \mathrm{T_{6}}= 0\oplus4$$
$$\mathrm{T_{7}}=1\oplus0, \mathrm{T_{8}}=0\oplus0$$
$$\mathrm{T_{9}}=1\oplus5\oplus4,\mathrm{T_{10}}= 0\oplus5\oplus4$$
$$\mathrm{T_{11}}=1\oplus5\oplus{ \left[\begin{smallmatrix} 5\\ &3 \end{smallmatrix}\right]}, \mathrm{T_{12}}=0\oplus5\oplus{ \left[\begin{smallmatrix} 5\\ &3 \end{smallmatrix}\right]}$$
$$\mathrm{T_{13}}=1\oplus4\oplus{ \left[\begin{smallmatrix} 4\\ &3 \end{smallmatrix}\right]},\mathrm{T_{14}}= 0\oplus4\oplus{ \left[\begin{smallmatrix} 4\\ &3 \end{smallmatrix}\right]}$$
$$\mathrm{T_{15}}=1\oplus3\oplus{ \left[\begin{smallmatrix} &5\\ 3 \end{smallmatrix}\right]}, \mathrm{T_{16}}=0\oplus3\oplus{ \left[\begin{smallmatrix} &5\\ 3 \end{smallmatrix}\right]}$$
$$\mathrm{T_{17}}=1\oplus3\oplus{ \left[\begin{smallmatrix} 4\\ &3 \end{smallmatrix}\right]},\mathrm{T_{18}}= 0\oplus3\oplus{ \left[\begin{smallmatrix} 4\\ &3 \end{smallmatrix}\right]}$$
$$\mathrm{T_{19}}=1\oplus{ \left[\begin{smallmatrix} 4&&5\\ &3 \end{smallmatrix}\right]}\oplus5\oplus4, \mathrm{T_{20}}=0\oplus{ \left[\begin{smallmatrix} 4&&5\\ &3 \end{smallmatrix}\right]}\oplus5\oplus4$$
$$\mathrm{T_{21}}=1\oplus3\oplus{ \left[\begin{smallmatrix} 4\\ &3 \end{smallmatrix}\right]}\oplus{ \left[\begin{smallmatrix} &5\\ 3 \end{smallmatrix}\right]}, \mathrm{T_{22}}=0\oplus3\oplus{ \left[\begin{smallmatrix} 4\\ &3 \end{smallmatrix}\right]}\oplus{ \left[\begin{smallmatrix} &5\\ 3 \end{smallmatrix}\right]}$$
$$\mathrm{T_{23}}=1\oplus{ \left[\begin{smallmatrix} 4&&5\\ &3 \end{smallmatrix}\right]}\oplus5 \oplus{ \left[\begin{smallmatrix} &5\\ 3 \end{smallmatrix}\right]}, \mathrm{T_{24}}=0\oplus{ \left[\begin{smallmatrix} 4&&5\\ &3 \end{smallmatrix}\right]}\oplus5 \oplus{ \left[\begin{smallmatrix} &5\\ 3 \end{smallmatrix}\right]}$$
$$\mathrm{T_{25}}=1\oplus{ \left[\begin{smallmatrix} 4&&5\\ &3 \end{smallmatrix}\right]}\oplus{ \left[\begin{smallmatrix} 4\\ &3 \end{smallmatrix}\right]}\oplus4,\mathrm{T_{26}}= 0\oplus{ \left[\begin{smallmatrix} 4&&5\\ &3 \end{smallmatrix}\right]}\oplus{ \left[\begin{smallmatrix} 4\\ &3 \end{smallmatrix}\right]}\oplus4$$
$$\mathrm{T_{27}}=1\oplus{ \left[\begin{smallmatrix} 4&&5\\ &3 \end{smallmatrix}\right]}\oplus{ \left[\begin{smallmatrix} 4\\ &3 \end{smallmatrix}\right]}\oplus{ \left[\begin{smallmatrix} &5\\ 3 \end{smallmatrix}\right]}, \mathrm{T_{28}}=0\oplus{ \left[\begin{smallmatrix} 4&&5\\ &3 \end{smallmatrix}\right]}\oplus{ \left[\begin{smallmatrix} 4\\ &3 \end{smallmatrix}\right]}\oplus{ \left[\begin{smallmatrix} &5\\ 3 \end{smallmatrix}\right]}$$
$$ 2^{m-3}\times14=2^{4-3}\times14=28$$
By Theorem \ref{2.7}, then the number of tilting $\Gamma$-modules is 28 ($=2^{4-3}\times 14$).
\end{example}

\begin{example}\label{3.10}
Let $\Lambda$ be a radical square zero algebra of type $E_{6}$. Then Auslander algebra $\Gamma$ is given by the quiver $Q$: $$\xymatrix{& & & & & & 6\ar[ld]_{u_{5}} & &9\ar[ld]_{u_{9}} &  & \\
&1 & 2 \ar[l]_{u_{1}} & 3\ar[l]_{u_{2}} & 4 \ar[l]_{u_{3}} &5 \ar[l]_{u_{4}} &  & 8\ar[lu]_{u_{7}}\ar[ld]_{u_{8}}& & & \\
& & & & & &7\ar[lu]_{u_{6}} & &10\ar[lu]_{u_{10}} & 11\ar[l]_{u_{11}} & 12\ar[l]_{u_{12}}}$$
with the relations: $u_{1} u_{2}=0, u_{3} u_{4}=0, u_{5} u_{7}=u_{6} u_{8}, u_{7} u_{9}=0, u_{8} u_{10}=0, u_{11} u_{12}=0 $.

The number of support $\tau$ tilting $\Gamma/(e)$-modules are as follows:
$$\mathrm{T_{1}}=1\oplus3\oplus7\oplus0, \mathrm{T_{2}}=0\oplus3\oplus7\oplus0$$
$$\mathrm{T_{3}}=1\oplus3\oplus5\oplus10, \mathrm{T_{4}}=0\oplus3\oplus5\oplus10$$
$$\mathrm{T_{5}}=1\oplus3\oplus6\oplus10, \mathrm{T_{6}}= 0\oplus3\oplus6\oplus10$$
$$\mathrm{T_{7}}=1\oplus3\oplus0\oplus10, \mathrm{T_{8}}=0\oplus3\oplus0\oplus10$$
$$\mathrm{T_{9}}=1\oplus3\oplus7\oplus10, \mathrm{T_{10}}=0\oplus3\oplus7\oplus10$$
$$\mathrm{T_{11}}=1\oplus3\oplus5\oplus0, \mathrm{T_{12}}=0\oplus3\oplus5\oplus0$$
$$\mathrm{T_{13}}=1\oplus3\oplus6\oplus0, \mathrm{T_{14}}= 0\oplus3\oplus6\oplus0$$
$$\mathrm{T_{15}}=1\oplus3\oplus0\oplus0, \mathrm{T_{16}}=0\oplus3\oplus0\oplus0$$
$$\mathrm{T_{17}}=1\oplus0\oplus7\oplus0, \mathrm{T_{18}}=0\oplus0\oplus7\oplus0$$
$$\mathrm{T_{19}}=1\oplus0\oplus5\oplus0, \mathrm{T_{20}}=0\oplus0\oplus5\oplus0$$
$$\mathrm{T_{21}}=1\oplus0\oplus6\oplus0, \mathrm{T_{22}}= 0\oplus0\oplus6\oplus0$$
$$\mathrm{T_{23}}=1\oplus0\oplus0\oplus0, \mathrm{T_{24}}=0\oplus0\oplus0\oplus0$$
$$\mathrm{T_{25}}=1\oplus0\oplus7\oplus10, \mathrm{T_{26}}=0\oplus0\oplus7\oplus10$$
$$\mathrm{T_{27}}=1\oplus0\oplus5\oplus10, \mathrm{T_{28}}=0\oplus0\oplus5\oplus10$$
$$\mathrm{T_{29}}=1\oplus0\oplus6\oplus10, \mathrm{T_{30}}= 0\oplus0\oplus6\oplus10$$
$$\mathrm{T_{31}}=1\oplus0\oplus0\oplus10, \mathrm{T_{32}}=0\oplus0\oplus0\oplus10$$
$$\mathrm{T_{33}}=1\oplus3\oplus7\oplus6\oplus0, \mathrm{T_{34}}= 0\oplus3\oplus7\oplus6\oplus0$$
$$\mathrm{T_{35}}=1\oplus0\oplus7\oplus6\oplus0, \mathrm{T_{36}}= 0\oplus0\oplus7\oplus6\oplus0$$
$$\mathrm{T_{37}}=1\oplus3\oplus7\oplus6\oplus10, \mathrm{T_{38}}= 0\oplus3\oplus7\oplus6\oplus10$$
$$\mathrm{T_{39}}=1\oplus0\oplus7\oplus6\oplus10, \mathrm{T_{40}}= 0\oplus0\oplus7\oplus6\oplus10$$
$$\mathrm{T_{41}}=1\oplus3\oplus5\oplus{ \left[\begin{smallmatrix} &7\\ 5 \end{smallmatrix}\right]}\oplus0, \mathrm{T_{42}}=0\oplus3\oplus5\oplus{ \left[\begin{smallmatrix} &7\\ 5 \end{smallmatrix}\right]}\oplus0$$
$$\mathrm{T_{43}}=1\oplus3\oplus5\oplus{ \left[\begin{smallmatrix} 6\\ &5 \end{smallmatrix}\right]}\oplus0,
\mathrm{T_{44}}= 0\oplus3\oplus5\oplus{ \left[\begin{smallmatrix} 6\\ &5 \end{smallmatrix}\right]}\oplus0$$
$$\mathrm{T_{45}}=1\oplus3\oplus7\oplus{ \left[\begin{smallmatrix} 7\\ &5 \end{smallmatrix}\right]}\oplus0, \mathrm{T_{46}}=0\oplus3\oplus7\oplus{ \left[\begin{smallmatrix} 7\\ &5 \end{smallmatrix}\right]}\oplus0$$
$$\mathrm{T_{47}}=1\oplus3\oplus6\oplus{ \left[\begin{smallmatrix} 6\\ &5 \end{smallmatrix}\right]}\oplus0,
\mathrm{T_{48}}= 0\oplus3\oplus6\oplus{ \left[\begin{smallmatrix} 6\\ &5 \end{smallmatrix}\right]}\oplus0$$
$$\mathrm{T_{49}}=1\oplus0\oplus5\oplus{ \left[\begin{smallmatrix} &7\\ 5 \end{smallmatrix}\right]}\oplus0, \mathrm{T_{50}}=0\oplus0\oplus5\oplus{ \left[\begin{smallmatrix} &7\\ 5 \end{smallmatrix}\right]}\oplus0$$
$$\mathrm{T_{51}}=1\oplus0\oplus5\oplus{ \left[\begin{smallmatrix} 6\\ &5 \end{smallmatrix}\right]}\oplus0,
\mathrm{T_{52}}= 0\oplus0\oplus5\oplus{ \left[\begin{smallmatrix} 6\\ &5 \end{smallmatrix}\right]}\oplus0$$
$$\mathrm{T_{53}}=1\oplus0\oplus7\oplus{ \left[\begin{smallmatrix} 7\\ &5 \end{smallmatrix}\right]}\oplus0, \mathrm{T_{54}}=0\oplus0\oplus7\oplus{ \left[\begin{smallmatrix} 7\\ &5 \end{smallmatrix}\right]}\oplus0$$
$$\mathrm{T_{55}}=1\oplus0\oplus6\oplus{ \left[\begin{smallmatrix} 6\\ &5 \end{smallmatrix}\right]}\oplus0,
\mathrm{T_{56}}= 0\oplus0\oplus6\oplus{ \left[\begin{smallmatrix} 6\\ &5 \end{smallmatrix}\right]}\oplus0$$
$$\mathrm{T_{57}}=1\oplus3\oplus5\oplus{ \left[\begin{smallmatrix} &7\\ 5 \end{smallmatrix}\right]}\oplus10, \mathrm{T_{58}}=0\oplus3\oplus5\oplus{ \left[\begin{smallmatrix} &7\\ 5 \end{smallmatrix}\right]}\oplus10$$
$$\mathrm{T_{59}}=1\oplus3\oplus5\oplus{ \left[\begin{smallmatrix} 6\\ &5 \end{smallmatrix}\right]}\oplus10,
\mathrm{T_{60}}= 0\oplus3\oplus5\oplus{ \left[\begin{smallmatrix} 6\\ &5 \end{smallmatrix}\right]}\oplus10$$
$$\mathrm{T_{61}}=1\oplus3\oplus7\oplus{ \left[\begin{smallmatrix} 7\\ &5 \end{smallmatrix}\right]}\oplus10, \mathrm{T_{62}}=0\oplus3\oplus7\oplus{ \left[\begin{smallmatrix} 7\\ &5 \end{smallmatrix}\right]}\oplus10$$
$$\mathrm{T_{63}}=1\oplus3\oplus6\oplus{ \left[\begin{smallmatrix} 6\\ &5 \end{smallmatrix}\right]}\oplus10,
\mathrm{T_{64}}= 0\oplus3\oplus6\oplus{ \left[\begin{smallmatrix} 6\\ &5 \end{smallmatrix}\right]}\oplus10$$
$$\mathrm{T_{65}}=1\oplus0\oplus5\oplus{ \left[\begin{smallmatrix} &7\\ 5 \end{smallmatrix}\right]}\oplus10, \mathrm{T_{66}}=0\oplus0\oplus5\oplus{ \left[\begin{smallmatrix} &7\\ 5 \end{smallmatrix}\right]}\oplus10$$
$$\mathrm{T_{67}}=1\oplus0\oplus5\oplus{ \left[\begin{smallmatrix} 6\\ &5 \end{smallmatrix}\right]}\oplus10,
\mathrm{T_{68}}= 0\oplus0\oplus5\oplus{ \left[\begin{smallmatrix} 6\\ &5 \end{smallmatrix}\right]}\oplus10$$
$$\mathrm{T_{69}}=1\oplus0\oplus7\oplus{ \left[\begin{smallmatrix} 7\\ &5 \end{smallmatrix}\right]}\oplus10, \mathrm{T_{70}}=0\oplus0\oplus7\oplus{ \left[\begin{smallmatrix} 7\\ &5 \end{smallmatrix}\right]}\oplus10$$
$$\mathrm{T_{71}}=1\oplus0\oplus6\oplus{ \left[\begin{smallmatrix} 6\\ &5 \end{smallmatrix}\right]}\oplus10,
\mathrm{T_{72}}= 0\oplus0\oplus6\oplus{ \left[\begin{smallmatrix} 6\\ &5 \end{smallmatrix}\right]}\oplus10$$
$$\mathrm{T_{73}}=1\oplus3\oplus{ \left[\begin{smallmatrix} 6&&7\\ &5 \end{smallmatrix}\right]}\oplus7\oplus6\oplus0, \mathrm{T_{74}}=0\oplus3\oplus{ \left[\begin{smallmatrix} 6&&7\\ &5 \end{smallmatrix}\right]}\oplus7\oplus6\oplus0$$
$$\mathrm{T_{75}}=1\oplus0\oplus{ \left[\begin{smallmatrix} 6&&7\\ &5 \end{smallmatrix}\right]}\oplus7\oplus6\oplus0, \mathrm{T_{76}}=0\oplus0\oplus{ \left[\begin{smallmatrix} 6&&7\\ &5 \end{smallmatrix}\right]}\oplus7\oplus6\oplus0$$
$$\mathrm{T_{77}}=1\oplus3\oplus{ \left[\begin{smallmatrix} 6&&7\\ &5 \end{smallmatrix}\right]}\oplus7\oplus6\oplus10, \mathrm{T_{78}}=0\oplus3\oplus{ \left[\begin{smallmatrix} 6&&7\\ &5 \end{smallmatrix}\right]}\oplus7\oplus6\oplus10$$
$$\mathrm{T_{79}}=1\oplus0\oplus{ \left[\begin{smallmatrix} 6&&7\\ &5 \end{smallmatrix}\right]}\oplus7\oplus6\oplus10, \mathrm{T_{80}}=0\oplus0\oplus{ \left[\begin{smallmatrix} 6&&7\\ &5 \end{smallmatrix}\right]}\oplus7\oplus6\oplus10$$
$$\mathrm{T_{81}}=1\oplus0\oplus5\oplus{ \left[\begin{smallmatrix} 6\\ &5 \end{smallmatrix}\right]}\oplus{ \left[\begin{smallmatrix} &7\\ 5 \end{smallmatrix}\right]}\oplus0,
\mathrm{T_{82}}=0\oplus0\oplus5\oplus{ \left[\begin{smallmatrix} 6\\ &5\end{smallmatrix}\right]}\oplus{ \left[\begin{smallmatrix} &7\\ 5 \end{smallmatrix}\right]}\oplus0$$
$$\mathrm{T_{83}}=1\oplus3\oplus5\oplus{ \left[\begin{smallmatrix} 6\\ &5 \end{smallmatrix}\right]}\oplus{ \left[\begin{smallmatrix} &7\\ 5 \end{smallmatrix}\right]}\oplus0,
\mathrm{T_{84}}=0\oplus3\oplus5\oplus{ \left[\begin{smallmatrix} 6\\ &5\end{smallmatrix}\right]}\oplus{ \left[\begin{smallmatrix} &7\\ 5 \end{smallmatrix}\right]}\oplus0$$
$$\mathrm{T_{85}}=1\oplus0\oplus5\oplus{ \left[\begin{smallmatrix} 6\\ &5 \end{smallmatrix}\right]}\oplus{ \left[\begin{smallmatrix} &7\\ 5 \end{smallmatrix}\right]}\oplus10,
 \mathrm{T_{86}}=0\oplus0\oplus5\oplus{ \left[\begin{smallmatrix} 6\\ &5\end{smallmatrix}\right]}\oplus{ \left[\begin{smallmatrix} &7\\ 5 \end{smallmatrix}\right]}\oplus10$$
$$\mathrm{T_{87}}=1\oplus3\oplus5\oplus{ \left[\begin{smallmatrix} 6\\ &5 \end{smallmatrix}\right]}\oplus{ \left[\begin{smallmatrix} &7\\ 5 \end{smallmatrix}\right]}\oplus10,
\mathrm{T_{88}}=0\oplus3\oplus5\oplus{ \left[\begin{smallmatrix} 6\\ &5\end{smallmatrix}\right]}\oplus{ \left[\begin{smallmatrix} &7\\ 5 \end{smallmatrix}\right]}\oplus10$$
$$\mathrm{T_{89}}=1\oplus3\oplus{ \left[\begin{smallmatrix} 6&&7\\ &5 \end{smallmatrix}\right]}\oplus7 \oplus{ \left[\begin{smallmatrix} &7\\ 5 \end{smallmatrix}\right]}\oplus0,
\mathrm{T_{90}}=0\oplus3\oplus{ \left[\begin{smallmatrix} 6&&7\\ &5\end{smallmatrix}\right]}\oplus7 \oplus{ \left[\begin{smallmatrix} &7\\ 5 \end{smallmatrix}\right]}\oplus0$$
$$\mathrm{T_{91}}=1\oplus3\oplus{ \left[\begin{smallmatrix} 6&&7\\ &5 \end{smallmatrix}\right]}\oplus{ \left[\begin{smallmatrix} 6\\ &5 \end{smallmatrix}\right]}\oplus6\oplus0,
\mathrm{T_{92}}= 0\oplus3\oplus{ \left[\begin{smallmatrix} 6&&7\\ &5 \end{smallmatrix}\right]}\oplus{ \left[\begin{smallmatrix} 6\\ &5 \end{smallmatrix}\right]}\oplus6\oplus0$$
$$\mathrm{T_{93}}=1\oplus0\oplus{ \left[\begin{smallmatrix} 6&&7\\ &5 \end{smallmatrix}\right]}\oplus7 \oplus{ \left[\begin{smallmatrix} &7\\ 5 \end{smallmatrix}\right]}\oplus0,
 \mathrm{T_{94}}=0\oplus0\oplus{ \left[\begin{smallmatrix} 6&&7\\ &5\end{smallmatrix}\right]}\oplus7 \oplus{ \left[\begin{smallmatrix} &7\\ 5 \end{smallmatrix}\right]}\oplus0$$
$$\mathrm{T_{95}}=1\oplus0\oplus{ \left[\begin{smallmatrix} 6&&7\\ &5 \end{smallmatrix}\right]}\oplus{ \left[\begin{smallmatrix} 6\\ &5 \end{smallmatrix}\right]}\oplus6\oplus0,
\mathrm{T_{96}}= 0\oplus0\oplus{ \left[\begin{smallmatrix} 6&&7\\ &5 \end{smallmatrix}\right]}\oplus{ \left[\begin{smallmatrix} 6\\ &5 \end{smallmatrix}\right]}\oplus6\oplus0$$
$$\mathrm{T_{97}}=1\oplus3\oplus{ \left[\begin{smallmatrix} 6&&7\\ &5 \end{smallmatrix}\right]}\oplus7 \oplus{ \left[\begin{smallmatrix} &7\\ 5 \end{smallmatrix}\right]}\oplus10,
\mathrm{T_{98}}=0\oplus3\oplus{ \left[\begin{smallmatrix} 6&&7\\ &5\end{smallmatrix}\right]}\oplus7 \oplus{ \left[\begin{smallmatrix} &7\\ 5 \end{smallmatrix}\right]}\oplus10$$
$$\mathrm{T_{99}}=1\oplus3\oplus{ \left[\begin{smallmatrix} 6&&7\\ &5 \end{smallmatrix}\right]}\oplus{ \left[\begin{smallmatrix} 6\\ &5 \end{smallmatrix}\right]}\oplus6\oplus10,
\mathrm{T_{100}}= 0\oplus3\oplus{ \left[\begin{smallmatrix} 6&&7\\ &5 \end{smallmatrix}\right]}\oplus{ \left[\begin{smallmatrix} 6\\ &5 \end{smallmatrix}\right]}\oplus6\oplus10$$
$$\mathrm{T_{101}}=1\oplus0\oplus{ \left[\begin{smallmatrix} 6&&7\\ &5 \end{smallmatrix}\right]}\oplus7 \oplus{ \left[\begin{smallmatrix} &7\\ 5 \end{smallmatrix}\right]}\oplus10,
\mathrm{T_{102}}=0\oplus0\oplus{ \left[\begin{smallmatrix} 6&&7\\ &5\end{smallmatrix}\right]}\oplus7 \oplus{ \left[\begin{smallmatrix} &7\\ 5 \end{smallmatrix}\right]}\oplus10$$
$$\mathrm{T_{103}}=1\oplus0\oplus{ \left[\begin{smallmatrix} 6&&7\\ &5 \end{smallmatrix}\right]}\oplus{ \left[\begin{smallmatrix} 6\\ &5 \end{smallmatrix}\right]}\oplus6\oplus10,
\mathrm{T_{104}}= 0\oplus0\oplus{ \left[\begin{smallmatrix} 6&&7\\ &5 \end{smallmatrix}\right]}\oplus{ \left[\begin{smallmatrix} 6\\ &5 \end{smallmatrix}\right]}\oplus6\oplus10$$
$$\mathrm{T_{105}}=1\oplus0\oplus{ \left[\begin{smallmatrix} 6&&7\\ &5 \end{smallmatrix}\right]}\oplus{ \left[\begin{smallmatrix} 6\\ &5 \end{smallmatrix}\right]}\oplus{ \left[\begin{smallmatrix} &7\\ 5 \end{smallmatrix}\right]}\oplus0,
\mathrm{T_{106}}=0\oplus0\oplus{ \left[\begin{smallmatrix} 6&&7\\ &5 \end{smallmatrix}\right]}\oplus{ \left[\begin{smallmatrix} 6\\ &5 \end{smallmatrix}\right]}\oplus{ \left[\begin{smallmatrix} &7\\ 5 \end{smallmatrix}\right]}\oplus0$$
$$\mathrm{T_{107}}=1\oplus3\oplus{ \left[\begin{smallmatrix} 6&&7\\ &5 \end{smallmatrix}\right]}\oplus{ \left[\begin{smallmatrix} 6\\ &5 \end{smallmatrix}\right]}\oplus{ \left[\begin{smallmatrix} &7\\ 5 \end{smallmatrix}\right]}\oplus0,
\mathrm{T_{108}}=0\oplus3\oplus{ \left[\begin{smallmatrix} 6&&7\\ &5 \end{smallmatrix}\right]}\oplus{ \left[\begin{smallmatrix} 6\\ &5 \end{smallmatrix}\right]}\oplus{ \left[\begin{smallmatrix} &7\\ 5 \end{smallmatrix}\right]}\oplus0$$
$$\mathrm{T_{109}}=1\oplus3\oplus{ \left[\begin{smallmatrix} 6&&7\\ &5 \end{smallmatrix}\right]}\oplus{ \left[\begin{smallmatrix} 6\\ &5 \end{smallmatrix}\right]}\oplus{ \left[\begin{smallmatrix} &7\\ 5 \end{smallmatrix}\right]}\oplus10,
\mathrm{T_{110}}=0\oplus3\oplus{ \left[\begin{smallmatrix} 6&&7\\ &5 \end{smallmatrix}\right]}\oplus{ \left[\begin{smallmatrix} 6\\ &5 \end{smallmatrix}\right]}\oplus{ \left[\begin{smallmatrix} &7\\ 5 \end{smallmatrix}\right]}\oplus10$$
$$\mathrm{T_{111}}=1\oplus0\oplus{ \left[\begin{smallmatrix} 6&&7\\ &5 \end{smallmatrix}\right]}\oplus{ \left[\begin{smallmatrix} 6\\ &5 \end{smallmatrix}\right]}\oplus{ \left[\begin{smallmatrix} &7\\ 5 \end{smallmatrix}\right]}\oplus10,
 \mathrm{T_{112}}=0\oplus0\oplus{ \left[\begin{smallmatrix} 6&&7\\ &5 \end{smallmatrix}\right]}\oplus{ \left[\begin{smallmatrix} 6\\ &5 \end{smallmatrix}\right]}\oplus{ \left[\begin{smallmatrix} &7\\ 5 \end{smallmatrix}\right]}\oplus10$$
$$ 2^{m-3}\times14=2^{6-3}\times14=112$$
By Theorem \ref{2.7}, then the number of tilting $\Gamma$-modules is 112 ($=2^{6-3}\times 14$).
\end{example}

\end{document}